%
%
%
%
\documentclass{amsart}
\usepackage{amssymb}

\newtheorem{theorem}{Theorem}[section]
\newtheorem{lemma}[theorem]{Lemma}

\newtheorem{example}[theorem]{Example}

\newtheorem{proposition}[theorem]{Proposition}
\newtheorem{corollary}[theorem]{Corollary}
\newtheorem{remark}[theorem]{Remark}

\numberwithin{equation}{section}



\begin{document}

\title{A characterization of absolutely minimum attaining operators}

\author{J. Ganesh}
\address{J. Ganesh, Department of Mathematics, I.I.T. Hyderabad, Kandi, Sangareddy, Medak(Dist), Telangana 502285, India.}
\email{ma12p1003@iith.ac.in}
\thanks{The first author was supported in part by National Board for Higher Mathematics Grant No. 2/39(1)/2012/NBHM/R D-II/5209.}

\author{G. Ramesh}
\address{G. Ramesh, Department of Mathematics, I.I.T. Hyderabad, Kandi, Sangareddy, Medak(Dist), Telangana 502285, India.}
\email{rameshg@iith.ac.in}
\author{D. Sukumar}
\address{D. Sukumar, Department of Mathematics, I.I.T. Hyderabad, Kandi, Sangareddy, Medak(Dist), Telangana 502285, India.}
\email{suku@iith.ac.in}

\subjclass[2010]{Primary 47B07, 47A10, 47A75; Secondary 47L07, 47B65}

\date{\today}


\keywords{minimum modulus, absolutely minimum attaining operator, compact operator, spectrum}

\begin{abstract}
We study the spectral properties of positive absolutely minimum attaining operators defined on infinite dimensional complex Hilbert spaces and using that derive a characterization theorem for such type of operators. We construct several examples and discuss some of the properties of this class. Also, we extend this characterization theorem for general absolutely minimum attaining operators by means of the polar decomposition theorem.
\end{abstract}

\maketitle


\section{Introduction}
Throughout the article we shall be concerned with infinite dimensional complex Hilbert spaces, denoted by $H, H_1, H_2$ etc, which are not necessarily separable. A bounded linear operator $T\colon H\rightarrow H$ is said to be diagonalizable if there exists an orthonormal basis for $H$ consisting entirely of eigenvectors of $T$. The spectral theorem assures that every positive compact operator is diagonalizable. Recently, a more general class of diagonalizable  operators, namely positive absolutely norm attaining operators have been discussed in \cite{car1, ram, sat}. This class includes the set of all positive compact operators as its subclass. Analogously,  absolutely minimum attaining operators are defined. Let $T$ be a bounded operator from $H_1$ to $H_2$. The quantity, $$ m(T):= \inf\{ \left\| Tx \right \| : x\in S_{H_1}\}, $$  is called the \emph{minimum modulus} of $T$. The operator $T$ is said to be minimum attaining if there exists a $x_0\in H_1$ with $\left\|x_0\right\|=1$ such that $m(T)=\left\|Tx_0\right\|$. It is said to be absolutely minimum attaining if $T|_{M}$ is minimum attaining for any non zero closed subspace $M$ of $H_1$. The class of absolutely minimum attaining operators is first introduced by Carvajal and Neves in \cite{car2}. In \cite{gan}, the authors of the present manuscript have proved that positive absolutely minimum attaining operators defined on separable complex Hilbert spaces of infinite dimension, can have at most one eigenvalue with infinite multiplicity. Based on this fact, they have classified the absolutely minimum attaining operators into two types \cite[Definition 3.9]{gan}, the one whose \emph{modulus value} has no eigenvalue with infinite multiplicity, called as the \emph{first type} and the others called as the \emph{second type}, respectively. They have studied the structure and spectrum of absolutely minimum attaining operators of \emph{first type} and left the \emph{second type} case for the future work. The main purpose of this article is to drop the separability condition and prove a general characterization theorem that covers both the \emph{first and second types} and also to rectify a gap in \cite[Theorem 4.6]{gan}. We appropriately modify and use some of the techniques present  in \cite{sat}, to develop the theory for the class of absolutely minimum attaining operators.

 The article is organized as follows: There are over all five sections. In the second section we fix all the basic notations and terminology that we are going to use later. In the third, we discuss some characterization theorems concerning the minimum attaining operators which we will frequently use in the forthcoming sections, fourth section is devoted for the study of the spectral properties of positive absolutely minimum attaining operators and in the final section we prove the main characterization theorem that we are looking for.

\section{Notations and Definitions}
As usual, the inner product and the induced norm are denoted by $\langle , \rangle $ and $\left\|. \right\|$ respectively. The unit sphere of a closed subspace $M$ of $H$ is denoted by $S_M$. If $N$ is a subset of $H$, then the closed linear span of $N$ is denoted by $\left[N\right]$.

The space of all bounded linear operators from $H_1$ to $H_2$ is denoted by $\mathcal B(H_1, H_2)$ and $\mathcal B(H, H)$ is denoted by $\mathcal B(H)$.  If $T\in \mathcal B(H_1, H_2)$, then the null space and the range space are denoted by $N(T)$ and $R(T)$, respectively. The adjoint of $T\in \mathcal B(H_1, H_2)$ is denoted by $T^*$. Let $T\in \mathcal B(H)$. Then $T$ is called normal if $TT^*=T^*T$, self-adjoint if $T^*=T$, positive if $T^*=T$ and $\langle Tx,x\rangle\geq0$ for all $x\in H$, we denote it by $T\geq 0$. An operator $T\in \mathcal{B}(H)$ is called \emph{compact} if $\overline{T(B)}$ is compact for every bounded subset $B$ of $H$.

The \emph{spectrum} is defined by, $\sigma (T)=\{\lambda \in \mathbb{C} \colon  T-\lambda I \hspace{0.1cm} \text{is not invertible in}\hspace{0.1cm} \mathcal B(H) \}$ and the \emph{point spectrum} is defined by, $\sigma_p (T)= \{ \lambda \in \mathbb{C}\colon T- \lambda I \hspace{0.1cm} \text{is not injective}\}$. A closed subspace $M$ of $H$ is said to be \emph{invariant} under $T\in\mathcal B(H)$ if $TM\subseteq M$ and \emph{reducing} if both $M$ and $M^{\perp}$ are invariant under $T$.
A closed subspace $M$ is reducing for $T$ if and only if $TM\subseteq M$ and $T^*M\subseteq M$.

The set of all minimum attaining operators from $H_1$ to $H_2$ is denoted by $\mathcal{M}(H_1,H_2)$ and $\mathcal{M}(H,H)$ by  $\mathcal{M}(H)$. Similarly, the set of all absolutely minimum attaining operators from $H_1$ to $H_2$ is denoted by $\mathcal{AM}(H_1,H_2)$ and $\mathcal{AM}(H,H)$ by $\mathcal{AM}(H)$. The set of all positive absolutely minimum attaining operators on $H$ is denoted by $\mathcal{AM^+}(H)$ that is, $\mathcal{AM^+}(H):=\{T\in \mathcal{AM}(H): T\geq 0\}$.

Let $T\in \mathcal{B}(H)$ and $T\geq 0$. Then there exists a unique  operator $S\in \mathcal{B}(H)$ such that $S\geq 0$ and $T=S^2$. The operator $T^{\frac{1}{2}}:=S$ is called the \emph{ square root} of $T$. For $T\in \mathcal{B}(H_1, H_2)$, we have $T^*T\in \mathcal{B}(H_1)$ and $T^*T\geq 0$. The operator $|T|:={(T^*T)}^{\frac{1}{2}}$ is called the \emph{modulus} of $T$ (for more details, see\cite{kub, ree}).

The polar decomposition theorem, stated below will be used in the sequel.

\begin{theorem}\cite[Theorem 5.89, page 406]{kub}
If $T\in \mathcal{B}(H_1, H_2)$, then there exists a partial isometry $V\in \mathcal{B}(H_1, H_2)$ such that $T=V|T|$ and $N(V)=N(|T|)$. Moreover, this decomposition is unique. That is, if $W\in \mathcal{B}(H_1, H_2)$ is a partial isometry  and $Q\in \mathcal{B}(H_1)$ is a positive operator such that  $T=WQ$ and $N(W)=N(Q)$, then $W=V$ and $Q=|T|$.
\end{theorem}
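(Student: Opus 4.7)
The plan is to build $V$ by prescribing its action on the range of $|T|$ via the formula $V(|T|x) := Tx$, and then extend. The key input is the norm identity
$$\||T|x\|^2 = \langle |T|^2 x, x\rangle = \langle T^*Tx, x\rangle = \|Tx\|^2 \quad (x \in H_1),$$
which I would verify first. This identity immediately shows that the prescription is well defined (if $|T|x=|T|y$ then $|T|(x-y)=0$, hence $T(x-y)=0$) and that it is isometric on $R(|T|)$; in particular it also gives $N(|T|)=N(T)$.

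Next I would extend the isometry by continuity from $R(|T|)$ to $\overline{R(|T|)}$, obtaining a map $V_0 \colon \overline{R(|T|)} \to H_2$. Because $|T|$ is self-adjoint, $\overline{R(|T|)} = N(|T|)^{\perp}$, so $H_1 = \overline{R(|T|)} \oplus N(|T|)$; I would then define $V$ as $V_0$ on $\overline{R(|T|)}$ and $0$ on $N(|T|)$. This $V$ is a partial isometry with initial space $\overline{R(|T|)}$ and $N(V) = N(|T|)$, and the relation $T = V|T|$ holds by construction on $R(|T|)$, hence on all of $H_1$ (since both sides vanish on $N(|T|)$).

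For uniqueness, suppose $T = WQ$ with $Q\geq 0$, $W$ a partial isometry and $N(W) = N(Q)$. Then $W^*W$ is the orthogonal projection onto $N(W)^{\perp} = N(Q)^{\perp} = \overline{R(Q)}$, so $W^*W\,Qx = Qx$ for every $x$, i.e.\ $W^*WQ = Q$. Therefore
$$T^*T = Q W^*W Q = Q^2,$$
and the uniqueness of the positive square root forces $Q = |T|$. The equality $V|T| = T = W|T|$ then says $V$ and $W$ agree on $R(|T|)$; by continuity they agree on $\overline{R(|T|)}$, and both vanish on $N(|T|) = \overline{R(|T|)}^{\perp}$, so $V = W$.

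The only genuinely delicate step is the extension stage: one must be careful to identify the correct initial space so that the normalization $N(V)=N(|T|)$ holds on the nose, for otherwise uniqueness fails (any partial isometry $W$ extending $V_0$ off $\overline{R(|T|)}$ would still satisfy $T=W|T|$). Once the initial space is pinned down as $N(|T|)^{\perp}$, the rest of the argument is a routine application of the isometric identity and the uniqueness of positive square roots.
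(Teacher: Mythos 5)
Your argument is correct and is the standard textbook proof of the polar decomposition: define $V$ on $R(|T|)$ by $V(|T|x)=Tx$ using the isometric identity $\||T|x\|=\|Tx\|$, extend to $\overline{R(|T|)}=N(|T|)^{\perp}$, set $V=0$ on $N(|T|)$, and derive uniqueness from $W^*W=P_{\overline{R(Q)}}$ together with the uniqueness of the positive square root of $T^*T$. Note that the paper does not prove this statement at all --- it is quoted verbatim from Kubrusly \cite[Theorem 5.89]{kub} as a known tool --- so there is no in-paper proof to compare against; your write-up supplies exactly the argument one finds in that reference, and you correctly flag the one point where care is needed, namely pinning the initial space of $V$ to $N(|T|)^{\perp}$ so that the normalization $N(V)=N(|T|)$ (and hence uniqueness) holds.
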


Following version of the spectral theorem will be used frequently in the forthcoming sections.

\begin{theorem}(\cite{hel, goh})
	Let $K$ be a compact self-adjoint operator on $H$. Then there exists an orthonormal sequence  $\{e_n\}_{n=1}^{k}\left(k<\infty \, \text{or}\, k=\infty \right)$ of eigenvectors of $K$ and corresponding sequence of eigenvalues $\{\lambda_n\}_{n=1}^{k}$ such that the following are true;
	\begin{enumerate}
		\item
		$Kx= \displaystyle \sum_{n=1}^{k} \lambda_n\langle x, e_n\rangle e_n,\text{for all}\, \, x\in H$
		\item $|\lambda_1|\geq |\lambda_2| \geq |\lambda_3|\geq \dots $
		\item If $k=\infty$, then $\lim\limits_{n\rightarrow \infty}\lambda_n=0$.
	\end{enumerate}	
\end{theorem}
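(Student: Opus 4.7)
The plan is to prove this by an inductive extraction of eigenvalue-eigenvector pairs, peeling off the one largest in absolute value at each stage and restricting to its orthogonal complement. The key preliminary lemma I would establish first is: for any nonzero compact self-adjoint $K$ on $H$, either $\|K\|$ or $-\|K\|$ is an eigenvalue of $K$. For this, use the well-known consequence of self-adjointness that $\|K\|=\sup_{\|x\|=1}|\langle Kx,x\rangle|$; pick $x_n\in S_H$ with $\langle Kx_n,x_n\rangle\to\lambda$ where $|\lambda|=\|K\|$, invoke compactness to pass to a subsequence with $Kx_n\to y$, and then expand $\|Kx_n-\lambda x_n\|^{2}=\|Kx_n\|^{2}-2\lambda\langle Kx_n,x_n\rangle+\lambda^{2}\le 2\lambda^{2}-2\lambda\langle Kx_n,x_n\rangle\to 0$. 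This forces $\lambda x_n\to y$, so $x_n\to y/\lambda$ in norm, and applying $K$ to both sides yields $Ky=\lambda y$ with $\|y\|=|\lambda|\neq 0$.

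Next I would iterate. Set $\lambda_1:=\lambda$ and $e_1:=y/\|y\|$, and observe that $M_1:=[\{e_1\}]^{\perp}$ is invariant under $K$ because $K$ is self-adjoint. The restriction $K|_{M_1}$ is still a compact self-adjoint operator on $M_1$, so the lemma applies on $M_1$ (if $K|_{M_1}\neq 0$) and produces $\lambda_2$ with $|\lambda_2|=\|K|_{M_1}\|\le|\lambda_1|$ and unit eigenvector $e_2\perp e_1$. Continuing, I obtain either a finite orthonormal sequence $\{e_n\}_{n=1}^{k}$ (if the restriction eventually becomes the zero operator, at which stage $K$ vanishes on $[\{e_1,\dots,e_k\}]^{\perp}$ and the finite expansion is immediate) or an infinite orthonormal sequence with monotonically nonincreasing $|\lambda_n|$.

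In the infinite case, I would show $\lambda_n\to 0$ by contradiction: if $|\lambda_n|\ge\varepsilon>0$ for all $n$, then for $n\neq m$ we have $\|Ke_n-Ke_m\|^{2}=\lambda_n^{2}+\lambda_m^{2}\ge 2\varepsilon^{2}$, so $\{Ke_n\}$ has no convergent subsequence, contradicting the fact that $K$ is compact and $\{e_n\}$ is bounded. For the expansion, take any $x\in H$ and write $x=\sum_{n=1}^{\infty}\langle x,e_n\rangle e_n + z$ with $z\perp e_n$ for every $n$; then $Kz$ lies in each $[\{e_1,\dots,e_n\}]^{\perp}$, and $\|Kz\|\le\|K|_{[\{e_1,\dots,e_n\}]^{\perp}}\|\,\|z\|=|\lambda_{n+1}|\,\|z\|\to 0$, forcing $Kz=0$. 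Continuity of $K$ then gives the series representation.

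The main obstacle is the first step, the existence of an eigenvalue of modulus exactly $\|K\|$. Self-adjointness alone (which yields the numerical-radius identity) and compactness alone (which yields convergent subsequences from bounded ones) are each insufficient; the argument genuinely requires both together, and their combination is precisely what makes the Rayleigh-quotient identity $\|Kx_n-\lambda x_n\|^{2}\to 0$ go through.
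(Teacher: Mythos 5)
Your proof is correct and is the classical argument: the paper offers no proof of this statement, simply citing it to \cite{hel, goh}, and your extraction of an eigenvalue of modulus $\|K\|$ via the numerical-radius identity, followed by restriction to successive orthogonal complements, is precisely the standard proof found in those references. No gaps: the Rayleigh-quotient estimate, the invariance of $[\{e_1,\dots,e_n\}]^{\perp}$ under the self-adjoint $K$, the orthogonality argument forcing $\lambda_n\to 0$, and the verification that $Kz=0$ for $z$ orthogonal to all $e_n$ are all handled correctly.
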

We will also use the following version of the spectral mapping theorem in the  forthcoming sections.
    \begin{theorem}(\cite[page 42]{hal})
    	Let $T\in\mathcal{B}(H)$ be an operator and `$p$' be a polynomial with complex coefficients. Then, $$\sigma \left(p(T)\right)=p\left(\sigma(T)\right).$$
	
	\end{theorem}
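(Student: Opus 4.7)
The plan is to prove the two inclusions of the asserted equality separately, using polynomial factorization over $\mathbb{C}$ in both directions. The key algebraic fact I will exploit throughout is that any two polynomials in $T$ commute, so that the factorization $p(z) - c = a_n \prod_i (z - \lambda_i)$ of scalar polynomials lifts verbatim to a factorization of $p(T) - cI$ in $\mathcal{B}(H)$ whose factors can be rearranged as if they were scalars. I may as well assume $p$ is non-constant of degree $n \geq 1$, since for a constant polynomial both sides of the claimed identity collapse to the single value of $p$.

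For the inclusion $p(\sigma(T)) \subseteq \sigma(p(T))$, I would take $\lambda \in \sigma(T)$ and use the scalar factorization $p(z) - p(\lambda) = (z - \lambda)\, q(z)$. Substituting $T$ yields the two-sided identity $p(T) - p(\lambda) I = (T - \lambda I)\, q(T) = q(T)\, (T - \lambda I)$, where commutativity of polynomials in $T$ is essential. If $p(T) - p(\lambda) I$ were invertible with inverse $R$, then $q(T)\, R$ would furnish a two-sided inverse for $T - \lambda I$, contradicting $\lambda \in \sigma(T)$. Hence $p(\lambda) \in \sigma(p(T))$.

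For the reverse inclusion, I would fix $\mu \in \sigma(p(T))$ and invoke the fundamental theorem of algebra to factor $p(z) - \mu = a_n \prod_{i=1}^{n} (z - \lambda_i)$, giving $p(T) - \mu I = a_n \prod_{i=1}^{n}(T - \lambda_i I)$. If every factor $T - \lambda_i I$ were invertible, the product would be invertible as well, contradicting $\mu \in \sigma(p(T))$. So some $\lambda_j$ must lie in $\sigma(T)$, and since $p(\lambda_j) = \mu$ by construction of the $\lambda_i$, we conclude $\mu \in p(\sigma(T))$.

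The main obstacle, modest as it is, sits in the first direction: one must ensure that invertibility of $p(T) - p(\lambda) I$ really yields a \emph{two-sided} inverse for $T - \lambda I$, which is precisely why the commutativity of $T - \lambda I$ with $q(T)$ is indispensable. The second direction is essentially automatic once one observes that a product of invertible operators is invertible. I do not expect any genuine analytical difficulty, since the whole argument is algebraic and relies only on polynomial factorization in $\mathbb{C}[z]$ and the ring structure of $\mathcal{B}(H)$.
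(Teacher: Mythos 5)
Your proof is correct and is the standard factorization argument for the polynomial spectral mapping theorem; the paper itself offers no proof, merely citing Halmos, and the argument given there is the same one: factor $p(z)-\mu$ over $\mathbb{C}$, lift to $\mathcal{B}(H)$ using commutativity of polynomials in $T$, and use that a product of commuting operators is invertible iff each factor is. No gaps.
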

\section{Preliminary results}
In this section we discuss some of the basic properties of minimum attaining operators that we will be used later on. Recall that  $T\in \mathcal{B}(H_1, H_2)$ is said to be minimum attaining if there exists a $x_0\in S_{H_1}$ such that $$\left\|Tx_0\right\|=m(T)= \inf\{ \left \| Tx \right \| : x\in S_{H_1}\}.$$
\begin{proposition}
	\label{prop:eigenvalue}
	Let $T\in \mathcal{B}(H)$ be self-adjoint. Then $T\in \mathcal{M}(H)$ iff either $m(T)$ or $-m(T)$ is an eigenvalue of $T$. In particular, when $T\geq 0$, we have $T\in \mathcal{M}(H)$ iff $m(T)$ is an eigenvalue of $T$.
\end{proposition}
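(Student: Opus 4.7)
The plan is to reduce the statement to a spectral fact about the positive operator $T^{2}-m(T)^{2}I$ and then handle the positive case as a corollary using the non-negativity of eigenvalues.

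First I would prove the forward direction of the general self-adjoint case. Suppose $T\in\mathcal{M}(H)$, so there is $x_{0}\in S_{H}$ with $\|Tx_{0}\|=m(T)$. Consider $S:=T^{2}-m(T)^{2}I$, which is self-adjoint. For every unit vector $x$,
\[
\langle Sx,x\rangle=\|Tx\|^{2}-m(T)^{2}\geq 0,
\]
so $S\geq 0$. At the minimizer, $\langle Sx_{0},x_{0}\rangle=0$. Writing $S=S^{1/2}S^{1/2}$ gives $\|S^{1/2}x_{0}\|^{2}=0$, hence $Sx_{0}=0$. Since $T$ is self-adjoint the factors commute, so
\[
(T-m(T)I)(T+m(T)I)x_{0}=0.
\]
Setting $y:=(T+m(T)I)x_{0}$, two cases arise. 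If $y=0$, then $Tx_{0}=-m(T)x_{0}$, so $-m(T)\in\sigma_{p}(T)$. If $y\neq 0$, then $Ty=m(T)y$, so $m(T)\in\sigma_{p}(T)$.

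Next I would handle the converse in the self-adjoint case. If $Tx_{0}=m(T)x_{0}$ for some unit vector $x_{0}$, then $\|Tx_{0}\|=|m(T)|=m(T)$, which together with the definition of $m(T)$ as an infimum shows $T\in\mathcal{M}(H)$. The same argument applies if $Tx_{0}=-m(T)x_{0}$, since $\|Tx_{0}\|=m(T)$ again.

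For the positive case, I would use the fact that every eigenvalue of a positive operator is non-negative (immediate from $\langle Tx,x\rangle\geq 0$ applied to an eigenvector). Combined with $m(T)\geq 0$ by definition, if $-m(T)$ is an eigenvalue then $-m(T)\geq 0$, forcing $m(T)=0$; in that case $m(T)$ itself is the eigenvalue. So for $T\geq 0$ the condition collapses to ``$m(T)$ is an eigenvalue of $T$'', and the equivalence follows from the general case.

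The argument is essentially routine; the only mild subtlety is the case split in the forward direction, where one must allow for the eigenvector to be either $x_{0}$ itself (producing $-m(T)$) or the shifted vector $(T+m(T)I)x_{0}$ (producing $m(T)$), rather than expecting $x_{0}$ to be an eigenvector of $T$ directly.
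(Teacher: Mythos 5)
Your proof is correct, and it is essentially the argument the paper relies on: the paper simply cites \cite[Proposition 3.1]{gan}, whose proof likewise reduces to observing that $x_{0}$ minimizes the quadratic form of the positive operator $T^{2}-m(T)^{2}I$, so that $(T-m(T)I)(T+m(T)I)x_{0}=0$, and then splits into the same two cases. The handling of the positive case via non-negativity of eigenvalues also matches; no gaps.
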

\begin{proof}
	The above result is proved  in \cite[Proposition 3.1, page 459]{gan} for the case of separable infinite dimensional complex  Hilbert spaces. The same proof works without any changes for non separable complex Hilbert spaces as well.
\end{proof}
\begin{proposition}
	\label{prop1}
	Let $T\in \mathcal{B}(H_1, H_2)$. Then the following statements are equivalent:
	\begin{enumerate}
		\item $T\in \mathcal{M}(H_1, H_2)$\\
		\item $|T| \in \mathcal{M}(H_1)$\\
		\item $T^*T \in \mathcal{M}(H_1)$.\\
	\end{enumerate}
\end{proposition}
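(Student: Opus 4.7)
The plan is to prove (1) $\iff$ (2) via a pointwise norm identity, and (2) $\iff$ (3) via the fact (already established in Proposition \ref{prop:eigenvalue}) that for a positive operator, being minimum attaining is equivalent to $m(\cdot)$ being an eigenvalue.

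First I would record the identity
\[
\|Tx\|^2 = \langle T^*Tx,x\rangle = \langle |T|^2 x, x\rangle = \langle |T|x,|T|x\rangle = \||T|x\|^2
\]
for every $x\in H_1$. Consequently $\|Tx\| = \||T|x\|$ on the whole space, which instantly yields $m(T)=m(|T|)$ and shows that a unit vector attains $m(T)$ for $T$ if and only if it attains $m(|T|)$ for $|T|$. This settles (1) $\Leftrightarrow$ (2).

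For (2) $\Leftrightarrow$ (3), both $|T|$ and $T^*T=|T|^2$ are positive, so by Proposition \ref{prop:eigenvalue} the statements reduce to whether $m(|T|)\in\sigma_p(|T|)$ and whether $m(T^*T)\in\sigma_p(T^*T)$. I would first establish $m(T^*T)=m(|T|)^2$: for a positive operator $A\in\mathcal{B}(H)$ one has $m(A)=\inf\sigma(A)$, and the spectral mapping theorem (stated earlier) gives $\sigma(|T|^2)=\{\lambda^2:\lambda\in\sigma(|T|)\}$, so taking the infimum yields the claim. Given this, the implication (2) $\Rightarrow$ (3) is immediate: an eigenvector of $|T|$ at $m(|T|)$ is an eigenvector of $T^*T=|T|^2$ at $m(|T|)^2 = m(T^*T)$.

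The one nontrivial direction is (3) $\Rightarrow$ (2): starting from a unit vector $x_0$ with $T^*Tx_0=m(T^*T)x_0=m(|T|)^2 x_0$, I must produce an eigenvector of $|T|$ at $m(|T|)$. The idea is to compute
\[
\bigl\|(|T|-m(|T|)I)x_0\bigr\|^2 = \langle|T|^2 x_0,x_0\rangle - 2m(|T|)\langle|T|x_0,x_0\rangle + m(|T|)^2,
\]
use $\langle|T|^2 x_0,x_0\rangle = m(|T|)^2$, and observe that $|T|\geq m(|T|)I$ (since $m(|T|)=\inf\sigma(|T|)$) forces $\langle|T|x_0,x_0\rangle\geq m(|T|)$. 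The right-hand side is then $\leq 0$, hence equals $0$, so $|T|x_0=m(|T|)x_0$. This is the step I expect to be the main obstacle, because it is the only place where one must move information from the square $T^*T$ back to $|T|$ itself; everything else is either the basic identity $\|Tx\|=\||T|x\|$ or a direct application of Proposition \ref{prop:eigenvalue} and the spectral mapping theorem.
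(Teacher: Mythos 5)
Your proof is correct, and its skeleton matches the paper's: the identity $\|Tx\|=\||T|x\|$ for $(1)\Leftrightarrow(2)$, the identity $m(T^*T)=m(|T|)^2$, and the reduction of $(2)\Leftrightarrow(3)$ to eigenvalue statements via Proposition \ref{prop:eigenvalue}. The divergence is in the one step you correctly flag as the crux, $(3)\Rightarrow(2)$. The paper factors
\[
\bigl(|T|+m(|T|)I\bigr)\bigl(|T|-m(|T|)I\bigr)x_0=0
\]
and cancels the invertible factor $|T|+m(|T|)I$; since that factor is invertible only when $m(|T|)>0$, the paper must split off the case $m(T^*T)=0$ and handle it separately via $N(T^*T)=N(|T|)$. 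Your argument instead expands $\|(|T|-m(|T|)I)x_0\|^2$ and bounds it by $0$ using $|T|\geq m(|T|)I$; this is uniform in $m(|T|)$ and needs no case distinction, at the mild cost of invoking $m(A)=\inf\sigma(A)$ for positive $A$ (available from the reference [Proposition 2.1]{ram} the paper uses elsewhere, or from the variational characterization of $\inf\sigma(A)$) both here and in your spectral-mapping derivation of $m(T^*T)=m(|T|)^2$, where the paper instead uses the formula $m(A)=\inf\{\langle Ax,x\rangle:\|x\|=1\}$ for $A\geq 0$. Both routes are sound; yours is marginally cleaner at the final step, the paper's is marginally more elementary in its prerequisites.
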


\begin{proof}
	$(1)\Leftrightarrow (2)$:\\
	We have, ${\left\|Tx\right\|}_{H_2}^2={\langle Tx, Tx \rangle}_{H_2} = {\langle |T|x, |T|x \rangle}_{H_1}={\left\||T|x\right\|}_{H_1}^2, \, \forall x\in H_1  $. So it follows that $T\in \mathcal{M}(H_1, H_2)$ iff $ |T| \in \mathcal{M}(H_1) $.\\
	
	Note that $T^*T\geq 0$. By \cite[Proposition 2.2]{car2}, we have the following;
	\begin{align*}
		m(T^*T)& = \inf \left\{\langle T^*Tx, x \rangle \colon x\in S_{H_1} \right\}\\
		& = \inf \left\{\langle {|T|}^2x, x \rangle \colon x\in S_{H_1} \right\}\\
		& = \inf \left\{{\left\||Tx|\right\|}^2 \colon x\in S_{H_1} \right\}\\
		& ={\left[m(|T|)\right]}^2.
	\end{align*}
	
	$(2)\Rightarrow (3)$: Let $|T| \in \mathcal{M}(H_1)$. We have  $|T|\geq 0$, so by Proposition \ref{prop:eigenvalue}, $\exists \, x_0 \in S_{H_1}$ such that $|T|x_0= m(|T|)x_0$. This implies, ${|T|}^2x_0= {\left[m(|T|)\right]}^2x_0$ and so $T^*Tx_0= m(T^*T)x_0$. Consequently, $T^*T \in \mathcal{M}(H_1)$.\\
	
	$(3)\Rightarrow (2)$: Let $T^*T \in \mathcal{M}(H_1)$. Then by Proposition \ref{prop:eigenvalue}, $\exists\, x_0  \in S_{H_1}$ such that
	\begin{equation}
	\label{eqn:3.1}
	T^*Tx_0= m(T^*T)x_0.
	\end{equation}
	We have the following two cases,
	
	Case(I): $m(T^*T)=0$
	
	By \eqref{eqn:3.1}, $T^*Tx_0=0$. Since $N(T^*T)=N(T)=N(|T|)$, we get $ |T|x_0=0$ and so $|T| \in \mathcal{M}(H_1)$.
	
	Case(II): $m(T^*T)={\left[m(|T|)\right]}^2>0$
	
	By \eqref{eqn:3.1}, we have ${|T|}^2x_0={\left[m(|T|)\right]}^2x_0$. Consequently, $$\left[ \left (|T|+m(|T|)I\right )\left (|T|-m(|T|)I\right ) \right] (x_0)=0.$$ Since $|T|+m(|T|)I$ is invertible, we get $|T|x_0=m(|T|)x_0$ and so $|T| \in \mathcal{M}(H_1)$.
\end{proof}	
	

\begin{corollary}
	\label{cor:normal}
	Let $T\in \mathcal{B}(H)$ be normal. Then $T\in \mathcal{M}(H)$ if and only if $T^*\in \mathcal{M}(H)$.
\end{corollary}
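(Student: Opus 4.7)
The plan is to reduce the statement to Proposition \ref{prop1} by exploiting normality to identify $|T|$ with $|T^*|$. First I would observe that normality of $T$ means $T^*T = TT^*$, so by uniqueness of the positive square root we have
\[
|T| = (T^*T)^{1/2} = (TT^*)^{1/2} = |T^*|.
\]

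Next, applying Proposition \ref{prop1} to the operator $T$ gives the equivalence $T \in \mathcal{M}(H) \iff |T| \in \mathcal{M}(H)$. Applying the same proposition to $T^*$ (which is also an element of $\mathcal{B}(H)$) gives $T^* \in \mathcal{M}(H) \iff |T^*| \in \mathcal{M}(H)$.

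Combining these two equivalences with the identity $|T| = |T^*|$ yields $T \in \mathcal{M}(H) \iff T^* \in \mathcal{M}(H)$, which is the desired conclusion. There is no serious obstacle here; the only subtlety is recognizing that normality is exactly the condition which forces $|T|$ and $|T^*|$ to coincide, after which the corollary follows immediately from Proposition \ref{prop1}.
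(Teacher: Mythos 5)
Your proposal is correct and is essentially identical to the paper's own argument: the paper likewise notes that normality gives $|T^*|=|T|$ and then invokes Proposition \ref{prop1}. You have merely spelled out the two applications of the proposition explicitly, which the paper leaves implicit.
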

\begin{proof}
	We have $T$ is normal and so $|T^*|=|T|$. Now the proof is immediate from Proposition \ref{prop1}.
\end{proof}
\begin{remark}
	Note that Corollary \ref{cor:normal} is not valid in general for all minimum attaining operators. We have the following example.
\end{remark}

\begin{example}
	Let $T:{\ell}^2 \rightarrow {\ell}^2$ be defined by, 
	$$ T(e_{n})=\begin{cases}
	0\ , & \text{if}\ n = 1 , \\
	\frac{1}{n-1}e_{n-1}\ , & \text{if}\ n\geq 2.
	\end{cases} $$
	Then $m(T)=0=\|Te_1\|$, where $e_1=( 1, 0, 0, 0,\dots) \in S_{{\ell}^2}$. Hence  $T\in \mathcal{M}({\ell}^2)$. Now, the adjoint of $T$ is the map $T^*\colon {\ell}^2 \rightarrow {\ell}^2$ which satisfies,  
	$$T^*(e_{n})= \frac{1}{n}e_{n+1} , \, \,  \forall \, n \in \mathbb{N} .$$
	Then we have, $$0\leq m(T^*) \leq \|T^*e_n\|=\frac{1}{n}, \, \, \forall \, n\in \mathbb{N}.$$ Consequently, $m(T^*)=0$. But $\|T^*x\|=0$ implies that $x=0$. Hence $T^*\notin \mathcal{M}({\ell}^2)$. Here we notice that $T$ is not normal.
	\end {example}
\section{ Spectral properties}
In this section we investigate for some of the important properties satisfied by the spectrum of positive absolutely minimum attaining operators. Recall that $T\in \mathcal{B}(H_1, H_2)$ is said to be absolutely minimum attaining if $T|_M$ is minimum attaining for any non zero closed subspace $M$ of $H_1$.
\begin{theorem}
	\label{thm:diagonal}
	Let $T\in \mathcal{AM^+}(H)$. Then $T$ is diagonalizable.
\end{theorem}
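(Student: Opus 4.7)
The plan is to find an orthonormal basis of eigenvectors by an exhaustion argument, using Zorn's lemma to bypass the lack of separability. First I would consider the collection $\mathcal{E}$ of all orthonormal subsets of $H$ consisting entirely of eigenvectors of $T$, partially ordered by inclusion. This family is non-empty (since $T$ itself is positive and minimum attaining, Proposition \ref{prop:eigenvalue} provides at least one eigenvector) and every chain has an upper bound given by its union, so Zorn's lemma produces a maximal element $\mathcal{E}_0$. Set $M:=\overline{[\mathcal{E}_0]}$; the goal is to show $M=H$.

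Next I would argue by contradiction: suppose $M\neq H$, so $M^{\perp}$ is a non-zero closed subspace of $H$. Since every vector in $\mathcal{E}_0$ is an eigenvector, $M$ is invariant under $T$; because $T$ is self-adjoint, $M^{\perp}$ is then also invariant, so $M^{\perp}$ reduces $T$ and the restriction $S:=T|_{M^{\perp}}$ is a well-defined positive operator on the Hilbert space $M^{\perp}$. The hypothesis $T\in\mathcal{AM}^+(H)$ applied to the non-zero closed subspace $M^{\perp}$ tells us that $S$ is minimum attaining (noting that minimum attaining as a map $M^{\perp}\to H$ is the same as minimum attaining as a map $M^{\perp}\to M^{\perp}$, since the ranges only affect the norm, which is unchanged).

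Now I would invoke Proposition \ref{prop:eigenvalue}: since $S\geq 0$ and $S\in \mathcal{M}(M^{\perp})$, there exists $v\in S_{M^{\perp}}$ with $Sv=m(S)\,v$, i.e.\ $Tv=m(S)\,v$. Thus $v$ is a unit eigenvector of $T$ orthogonal to every member of $\mathcal{E}_0$, so $\mathcal{E}_0\cup\{v\}$ is a strictly larger orthonormal set of eigenvectors of $T$, contradicting the maximality of $\mathcal{E}_0$. Hence $M=H$, and $\mathcal{E}_0$ is an orthonormal basis of $H$ consisting of eigenvectors of $T$, which is the definition of diagonalizability.

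The argument has no real obstacle beyond the two points one must verify carefully: that $M^{\perp}$ is invariant (using self-adjointness, since spans of eigenvectors are invariant and orthogonal complements of invariant subspaces of self-adjoint operators are again invariant), and that the restriction $T|_{M^{\perp}}$ inherits both positivity and the minimum-attaining property from the $\mathcal{AM}^+$ hypothesis. Once these are in place, Proposition \ref{prop:eigenvalue} does all the work, and no spectral/compactness machinery or separability assumption is needed.
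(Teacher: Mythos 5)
Your proposal is correct and follows essentially the same route as the paper: Zorn's lemma on orthonormal sets of eigenvectors, invariance of the orthogonal complement of their span via self-adjointness, and Proposition \ref{prop:eigenvalue} applied to the positive minimum-attaining restriction to produce a new eigenvector contradicting maximality. (Your version even states the key step more accurately than the paper, which writes $\|T|_{H_0^{\perp}}\|$ where it means $m(T|_{H_0^{\perp}})$.)
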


\begin{proof}
	The proof follows in the similar lines to that of \cite[Theorem 3.8]{sat}. Let $\mathcal{B}$ be the collection of all orthonormal sets of eigenvectors of $T$. Since $T\geq 0$ and $T\in \mathcal{M}(H)$, by Proposition \ref{prop:eigenvalue}, we have $m(T)$ is an eigenvalue of $T$ and there exists a corresponding eigenvector for $T$, so $\mathcal{B}\neq \emptyset$. The elements of  $\mathcal{B}$ can be ordered by inclusion, and every chain $\mathcal{C}$ in  $\mathcal{B}$ has an upper bound, given by the union of all elements of $\mathcal{C}$. Thus, Zorn’s Lemma \cite[ page 267]{goh} assures the existence of a maximal element $B$ in  $\mathcal{B}$. Let $B=\{u_{\lambda} \colon \lambda \in \Lambda \}$ be the maximal orthonormal set of eigenvectors of $T$. We claim that $B$ is an orthonormal basis for $H$. \\
	
	Let $H_0 := \left[B\right]$. We claim that $H_0 = H$. It is enough to prove that ${H_0}^{\perp} =\{0\}$. Firstly, we observe that ${H_0}^{\perp}$ is invariant under $T$. Let $\mathcal{F}:=\left\{ F\subseteq \Lambda \colon F\, \text{is finite} \right\}$. Then given $x\in H_0$, we have
	\begin{equation*}
		x=\sum_{\lambda \in \Lambda}\langle x, u_\lambda \rangle u_\lambda =\lim_{F\in \mathcal{F}} \sum_{\lambda \in F}\langle x, u_\lambda \rangle u_\lambda .
	\end{equation*}
	Since the above net converges in the norm topology and $T$ is bounded, it follows that
	\begin{align*}
		Tx & =T\left( \lim_{F\in \mathcal{F}}\sum_{\lambda \in F}\langle x, u_\lambda \rangle u_\lambda \right)=\lim_{F\in \mathcal{F}}T\left( \sum_{\lambda \in F}\langle x, u_\lambda \rangle u_\lambda \right)\\
		& =\lim_{F\in \mathcal{F}} \sum_{\lambda \in F}\langle x, u_\lambda \rangle Tu_\lambda = \sum_{\lambda \in  \Lambda}\langle x, u_\lambda \rangle \alpha_{\lambda} u_\lambda \in H_0,
	\end{align*}
	considering $Tu_{\lambda}= \alpha_{\lambda} u_{\lambda}$, where $\alpha_{\lambda} \geq 0$ for every $\lambda \in \Lambda$. This shows that $H_0$ is an invariant subspace for $T$. Since $T^*=T$, it follows that $H_{0}^{\perp}$ is also invariant under $T$.\\
	
	It remains to show that ${H_0}^{\perp} =\{0\}$. Suppose, on the contrary that ${H_0}^{\perp}\neq\{0\}$. Then ${H_0}^{\perp}$ is a nontrivial closed subspace of $H$. Now $T$ is a positive absolutely minimum attaining operator implies that  $T|_{{H_0}^{\perp}}$ is also positive and attains its minimum on ${H_0}^{\perp}$.
	Consequently, $\|T|_{{H_0}^{\perp}}\|$ is an eigenvalue of $T|_{{H_0}^{\perp}}$. Let $u$ be a unit eigenvector of $T|_{{H_0}^{\perp}}$ corresponding to the eigenvalue $\|T|_{{H_0}^{\perp}}\|$. Clearly, $u\in{H_0}^{\perp}$ and so $u \in {B}^{\perp}$, a contradiction to the maximality of $B$ and we conclude that ${H_0}^{\perp} = \{0\}$. This shows that $H$ has an orthonormal basis consisting  of eigenvectors of $T$. Hence $T$ is diagonalizable.
\end{proof}

\begin{corollary}
	\label{cor1:diagonal}
	Let $T\in \mathcal{AM^+}(H)$. Then we have,
	\begin{equation*}
		Tx=\displaystyle \sum_{\lambda \in \Lambda}\alpha_{\lambda} \langle x, u_{\lambda}\rangle u_{\lambda}, \forall x\in H,
	\end{equation*}
	where $\{u_{\lambda}\colon \lambda \in \Lambda\}$ is an orthonormal basis for $H$ such that $Tu_{\lambda}=\alpha_{\lambda}u_{\lambda}, \forall \lambda \in \Lambda$. Moreover, for every nonempty subset $\Gamma$ of $\Lambda$, we have $\inf \{\alpha_{\lambda}\colon \ \lambda \in \Gamma\}=\min\{\alpha_{\lambda}\colon \ \lambda \in \Gamma\}$.
\end{corollary}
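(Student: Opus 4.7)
The first formula is essentially a restatement of Theorem \ref{thm:diagonal}. By diagonalizability, fix an orthonormal basis $\{u_\lambda : \lambda \in \Lambda\}$ of eigenvectors with $Tu_\lambda = \alpha_\lambda u_\lambda$ (with $\alpha_\lambda \ge 0$ since $T \ge 0$). For $x \in H$, writing $x = \sum_\lambda \langle x,u_\lambda\rangle u_\lambda$ as a norm-convergent net over finite subsets of $\Lambda$ and applying the bounded operator $T$ term by term (exactly as in the proof of Theorem \ref{thm:diagonal}) yields the stated expansion for $Tx$.

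For the moreover statement, my plan is to restrict $T$ to a subspace adapted to $\Gamma$ and use Proposition \ref{prop:eigenvalue}. Let $\Gamma \subseteq \Lambda$ be nonempty, set $\alpha_0 := \inf\{\alpha_\lambda : \lambda \in \Gamma\}$, and consider the nonzero closed subspace
\[
M := \overline{\mathrm{span}}\{u_\lambda : \lambda \in \Gamma\}.
\]
Since $M$ is spanned by eigenvectors of $T$, it is reducing for $T$, so $T|_M$ is a well-defined positive operator on $M$. Because $T \in \mathcal{AM}^+(H)$, $T|_M$ is minimum attaining; hence by Proposition \ref{prop:eigenvalue} the value $m(T|_M)$ is an eigenvalue of $T|_M$.

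Next I would compute $m(T|_M)$ explicitly. For any unit vector $x = \sum_{\lambda \in \Gamma} c_\lambda u_\lambda \in S_M$, Parseval gives $\|Tx\|^2 = \sum_{\lambda \in \Gamma} |c_\lambda|^2 \alpha_\lambda^2 \ge \alpha_0^2$, while testing against each individual $u_\lambda$ yields $\|Tu_\lambda\| = \alpha_\lambda$. Combining these two inequalities forces $m(T|_M) = \alpha_0$.

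The final step, which I regard as the crux, is to convert the fact that $\alpha_0$ is an eigenvalue of $T|_M$ into an actual attainment in $\Gamma$. Let $u \in S_M$ be a corresponding eigenvector and expand $u = \sum_{\lambda \in \Gamma} c_\lambda u_\lambda$. The eigenvalue equation $Tu = \alpha_0 u$ reads $\sum_{\lambda \in \Gamma} c_\lambda(\alpha_\lambda - \alpha_0) u_\lambda = 0$, and by orthonormality $c_\lambda(\alpha_\lambda - \alpha_0) = 0$ for every $\lambda \in \Gamma$. Since $u \neq 0$, some coefficient $c_{\lambda^*}$ is nonzero, whence $\alpha_{\lambda^*} = \alpha_0$. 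Therefore $\alpha_0 \in \{\alpha_\lambda : \lambda \in \Gamma\}$, so the infimum is a minimum. The main obstacle is exactly this last inference: passing from the abstract existence of an eigenvector at level $\alpha_0$ to a concrete index $\lambda^* \in \Gamma$ achieving the value, which is precisely where the absolutely minimum attaining hypothesis delivers more than plain self-adjointness would.
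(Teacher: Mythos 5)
Your proof is correct and follows essentially the same route as the paper: both restrict $T$ to the closed span $M=\left[u_\lambda : \lambda\in\Gamma\right]$, identify $m(T|_{M})$ with $\inf\{\alpha_\lambda : \lambda\in\Gamma\}$, and exploit the minimum-attaining property of the restriction. The only difference is one of packaging: you argue directly via Proposition \ref{prop:eigenvalue} (an eigenvector of $T|_{M}$ at level $\alpha_0$ forces $\alpha_{\lambda^*}=\alpha_0$ for some $\lambda^*\in\Gamma$), whereas the paper argues by contradiction, showing that if the infimum were not attained then $\|T|_{M}x\|>m(T|_{M})$ for every unit vector $x\in M$.
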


\begin{proof}
	We have that $T$ is diagonalizable by Theorem \ref{thm:diagonal}. Hence there exists an orthonormal basis $\{u_{\lambda}\colon \lambda \in \Lambda\}$  for $H$ such that $Tu_{\lambda}=\alpha_{\lambda}u_{\lambda}, \forall \lambda \in \Lambda$ with $\alpha_{\lambda}\geq 0$. Consequently, we have
	\begin{equation*}
		Tx=\displaystyle \sum_{\lambda \in \Lambda}\alpha_{\lambda} \langle x, u_{\lambda}\rangle u_{\lambda}, \forall x\in H.
	\end{equation*}
	It remains to prove the next part. Let us assume on the contrary that, $$\inf \{\alpha_{\lambda}\colon  \lambda \in \Gamma_0\} \neq \text{min} \{\alpha_{\lambda}\colon  \lambda \in \Gamma_0\},$$ for some nonempty subset $\Gamma_0$ of $\Lambda$. Let us denote by $\alpha=\inf \{\alpha_{\lambda}\colon  \lambda \in \Gamma_0\}$ and $H_0:=\left[u_{\lambda} \colon \lambda \in \Gamma_0 \right]$. Then for every $x\in S_{H_0}$, we have 	
	\begin{align*}
		{\|T|_{H_0}x\|}^2 & ={\left\|\sum_{\lambda \in \Gamma_0}\alpha_{\lambda} \langle x, u_{\lambda}\rangle u_{\lambda}\right\|}^2= \sum_{\lambda \in \Gamma_0}{|\alpha_{\lambda}|}^2 {|\langle x, u_{\lambda}\rangle u_{\lambda}|}^2 \\
		& > \sum_{\lambda \in \Gamma_0}{|\alpha|}^2 {|\langle x, u_{\lambda}\rangle u_{\lambda}|}^2={|\alpha|}^2\sum_{\lambda \in \Gamma_0} {|\langle x, u_{\alpha}\rangle u_{\alpha}|}^2={|\alpha|}^2{\|x\|}^2\\
		& = {| \alpha |}^2={\left(\inf \{\alpha_{\lambda}\colon \ \lambda \in \Gamma_0\}\right)}^2\\
		& = {\left[m(T|_{H_0})\right]}^2.
	\end{align*}	
	This implies that $\|T|_{H_0}x\|>m(T|_{H_0})$ for every $x\in S_{H_0}$, which means that $T$ is not minimum attaining on $H_0$, a contradiction to $T\in \mathcal{AM}(H)$. This proves the assertion.
\end{proof}

    Using the techniques from \cite {sat}, we prove the following lemma.

\begin{lemma}
	\label{lemma:equal}
	Let $T\in \mathcal{AM^+}(H)$. Suppose there exists two increasing sequences of eigenvalues $\{a_{n}\}_{n\in \mathbb{N}}$ , $\{b_{n}\}_{n\in \mathbb{N}}$ of $T$ with corresponding orthonormal sequences of eigenvectors $\{f_{n}\}_{n\in \mathbb{N}}$ , $\{g_{n}\}_{n\in \mathbb{N}}$ such that $a_n\rightarrow a$ and $b_n\rightarrow b$. Then we must have $a=b$.
\end{lemma}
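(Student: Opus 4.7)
The plan is to argue by contradiction: assume $a \neq b$, say $a < b$ without loss of generality. I would then construct a closed subspace $M\subseteq H$ on which $m(T|_M)=a$ but this value is not attained, contradicting $T\in\mathcal{AM}(H)$. First, monotonicity gives $a_n\leq a < b$ and $b_m\to b$, so by discarding finitely many initial terms of $\{b_m\}$ I may assume $a_n < b_m$ for all $n,m$. Because $T$ is self-adjoint, eigenvectors corresponding to distinct eigenvalues are orthogonal, and hence the combined family $\{f_n\}_n \cup \{g_m\}_m$ is orthonormal.

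Next, I would blend each pair $(f_n,g_n)$ into a unit vector whose image barely exceeds $a$ in norm. Specifically, set
$$ t_n := \frac{b_n^{\,2} - a^{2} - 1/n}{b_n^{\,2} - a_n^{\,2}}, $$
which lies in $[0,1]$ for all sufficiently large $n$: the numerator is non-negative because $b_n\to b > a$, and does not exceed the denominator because $a_n \leq a$ and so $a_n^{\,2}\leq a^2 \leq a^2+1/n$. Define
$$ x_n := \sqrt{t_n}\, f_n + \sqrt{1-t_n}\, g_n. $$
A direct computation yields $\|x_n\|=1$ and $\|Tx_n\|^{2} = t_n a_n^{\,2} + (1-t_n) b_n^{\,2} = a^{2} + 1/n$. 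By the orthonormality of $\{f_n\}\cup\{g_m\}$, both the families $\{x_n\}$ and $\{Tx_n\}$ are orthonormal (respectively orthogonal).

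Finally, set $M := [\{x_n : n\geq N\}]$ for the index $N$ beyond which the above construction is valid. For any $x = \sum_n c_n x_n \in S_M$, Pythagoras gives
$$ \|Tx\|^{2} = \sum_n |c_n|^{2}\bigl(a^{2} + 1/n\bigr) = a^{2} + \sum_n \frac{|c_n|^{2}}{n} > a^{2}, $$
the strict inequality holding because $\sum_n |c_n|^2 = 1 \neq 0$. On the other hand $\|Tx_n\|^{2} = a^{2}+1/n \to a^{2}$, so $m(T|_M) = a$ and this value is never attained. Thus $T|_M$ fails to be minimum attaining, contradicting $T\in\mathcal{AM}(H)$ and forcing $a = b$.

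The main obstacle I foresee is not any single hard step but the bookkeeping required to guarantee that $\{f_n\}\cup\{g_m\}$ is orthonormal and that $t_n \in [0,1]$ for large $n$; both reduce to passing to tails of the given sequences. Once these technicalities are in place, the essential idea is that mixing $f_n$ with $g_n$ in the ratio $\sqrt{t_n}:\sqrt{1-t_n}$ tunes $\|Tx_n\|^{2}$ to equal $a^{2}$ plus a strictly positive, vanishing perturbation---precisely the situation that yields an infimum which cannot be achieved on the closed span of the $x_n$'s.
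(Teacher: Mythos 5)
Your proof is correct and takes essentially the same approach as the paper: blend each pair $(f_n,g_n)$ into a unit vector $x_n$ with $\|Tx_n\|^2$ strictly greater than $a^2$ but tending to $a^2$, and show that on the closed span of the $x_n$'s the minimum modulus equals $a$ yet is never attained. The only difference is cosmetic --- you tune the mixing weights so that $\|Tx_n\|^2=a^2+1/n$, whereas the paper targets $c_n^2$ with $c_n=a+\frac{b-a}{2n}$.
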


\begin{proof}
	 Suppose $a\neq b$, then we have either $a < b $ or $b<a$. Let us consider the case $a<b$ and the other case can be dealt similarly. Without loss of generality we may assume that  $a<b_1$ so that $a_n <b_n$ for each $n\in \mathbb{N}$ (otherwise, we can choose a natural number $m$ such that $a<b_m$, redefine the sequence $(b_{n})_{n=m}^{\infty}$ by $(\widetilde{b}_{n})_{n=1}^{\infty}$ and proceed). Note that $Tf_n=a_n f_n$ and $Tg_n=b_n g_n$ for each $n\in \mathbb{N}$. Define $H_0 :=\left[ t_n f_n + \sqrt{1-t_n^2} g_n\colon n \in \mathbb{N}\right]$, where $t_n\in [0, 1]$ are yet to be determined. Observe that $H_0$ is a closed subspace of $H$ and hence a Hilbert space by itself. Moreover, the set $e_n=t_n f_n + \sqrt{1-t_n^2} g_n$ serves to be an orthonormal basis of $H_0$. Now let us define a sequence $(c_{n})_{n\in \mathbb{N}}$ by $c_{n}=a+\frac{(b-a)}{2n}$ for each $n\in \mathbb{N}$. Then $(c_{n})_{n\in \mathbb{N}}$ is a strictly decreasing sequence such that for every $n\in \mathbb{N}, {a_n}^2 < {c_n}^2 < {b_n}^2$ and $\lim_{n \rightarrow \infty} c_n=a$. Notice that ${t_n}^2{a_n}^2+(1-{t_n}^2){b_n}^2$ is a convex combination of ${a_n}^2$ and ${b_n}^2$, and hence it follows that ${t_n}^2{a_n}^2+(1-{t_n}^2){b_n}^2 \in [{a_n}^2, {b_n}^2]$ for each $n\in \mathbb{N}$. In fact, by choosing the right value of ${t_n}^2 \in [0, 1]$,  ${t_n}^2{a_n}^2+(1-{t_n}^2){b_n}^2$ can give any point in the interval $[{a_n}^2, {b_n}^2]$. Let us then choose a sequence $(t_n)_{n\in \mathbb{N}}$ such that ${t_n}^2{a_n}^2+(1-{t_n}^2){b_n}^2={c_n}^2$. Now $H_0=[t_n f_n+\sqrt{(1-{t_n}^2)}g_n\colon n\in \mathbb{N}]$ gives that,	
	\begin{align*}
		{[m(T|_{H_0})]}^2 & =\text{inf}\{ {\|Tx\|}^2\colon x\in H_0, \|x\|=1 \}\\
		& \leq \text{inf}\{ {\|Te_n\|}^2\colon n\in \mathbb{N} \}\\
		& \leq \text{inf}\{ {t_n}^2 {a_n}^2 + (1-t_n^2){b_n}^2\colon n\in \mathbb{N} \}\\
		& \leq \text{inf}\{ {c_n}^2\colon n\in \mathbb{N} \}\\
		& \leq {a}^2.
	\end{align*}
	However, any $x\in H_0$ with $\|x\|=1$, can be written as,
	\begin{equation*}
	x=\displaystyle \sum_{n=1}^{\infty} s_{n}\left(t_n f_n+\sqrt{(1-{t_n}^2)}\right)g_n ,  \text{where}  \displaystyle \sum_{n=1}^{\infty} {|s_{n}|}^2=1.
\end{equation*}
	 Consequently,
	\begin{align*}
		{\|T|_{H_0}x\|}^2& = {\left\|T\left(\displaystyle \sum_{n=1}^{\infty}s_{n} (t_n f_n+\sqrt{(1-{t_n}^2}g_n)\right)\right \|}^2\\
		& = \displaystyle \sum_{n=1}^{\infty}{|s_{n}|}^2 \left({t_n}^2{a_n}^2+(1-{t_n}^2){b_n}^2\right)\\
		& > \sum_{n=1}^{\infty}{|s_{n}|}^2{c_n}^2 \\
		& > {a}^2.
	\end{align*}	
	This implies that for every element $x\in H_0$ with $\|x\|=1$ we have $\|T|_{H_0}x\|> a \geq m(T|_{H_0})$. Which means that $T$ is not minimum attaining on $H_0$, a contradiction to $T\in \mathcal{AM}(H)$. Hence our assumption $a<b$ is wrong. Similarly, by changing the roles of $a$ and $b$, we prove that $b<a$ cannot be true. So we must have $a=b$. This completes the proof.
\end{proof}

\begin{proposition}
	\label{prop:onelimitpoint}
	Let $T\in \mathcal{AM^+}(H)$. Then  $\sigma (T)$ has at most one limit point. Moreover, this unique limit point (if it exists) can only be the limit of an increasing sequence in the spectrum.
\end{proposition}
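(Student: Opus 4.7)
The plan is to exploit the diagonal structure of $T$ to reduce the problem to one about the set of eigenvalues, and then rule out two distinct limit points by invoking Lemma \ref{lemma:equal}.

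By Theorem \ref{thm:diagonal} and Corollary \ref{cor1:diagonal} there is an orthonormal basis $\{u_\lambda : \lambda \in \Lambda\}$ of eigenvectors of $T$ with $Tu_\lambda = \alpha_\lambda u_\lambda$, $\alpha_\lambda \geq 0$. Since $T$ acts diagonally on this basis, $T - \mu I$ is invertible precisely when $\inf_\lambda \lvert \alpha_\lambda - \mu\rvert > 0$, so $\sigma(T) = \overline{\{\alpha_\lambda : \lambda \in \Lambda\}}$. In particular, the limit points of $\sigma(T)$ are exactly the limit points of the eigenvalue set $E := \{\alpha_\lambda : \lambda \in \Lambda\}$.

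The first key observation is that Corollary \ref{cor1:diagonal} rules out any strictly decreasing convergent sequence in $E$, because such a sequence would realise its infimum without attaining it. From this I would deduce that every limit point $a$ of $\sigma(T)$ is the limit of an increasing sequence in $E$ approaching $a$ strictly from below. Indeed, given any sequence of eigenvalues $\beta_n \to a$ with $\beta_n \neq a$, infinitely many must satisfy $\beta_n < a$: if infinitely many lay above $a$, a monotone subsequence would be either strictly decreasing (forbidden) or strictly increasing bounded below by $\beta_1 > a$ (and hence unable to converge down to $a$). From the terms below $a$ one extracts a monotone subsequence, necessarily increasing, converging to $a$. This already proves the \emph{moreover} clause.

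Now suppose for contradiction that $\sigma(T)$ has two distinct limit points, say $a < b$. By the previous step, take strictly increasing sequences $a_n \nearrow a$ and $b_n \nearrow b$ of eigenvalues. Passing to a tail of $(b_n)$ one may assume $b_n > a$ for every $n$, making the two value sets disjoint. Pick unit eigenvectors $f_n$ for $a_n$ and $g_n$ for $b_n$; since $T$ is self-adjoint and all the eigenvalues in $\{a_n\} \cup \{b_n\}$ are pairwise distinct, $\{f_n\}$ and $\{g_n\}$ are each orthonormal and moreover $f_n \perp g_m$ for all $n,m$. Lemma \ref{lemma:equal} then forces $a = b$, the desired contradiction. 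The main technical point is arranging the two eigenvector sequences to be mutually orthogonal so that the hypotheses of Lemma \ref{lemma:equal} (implicitly used in its proof through the construction of $H_0$) are genuinely met; the disjointness of the two value sets takes care of this automatically.
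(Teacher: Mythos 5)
Your proof is correct and follows essentially the same route as the paper: diagonalize via Theorem \ref{thm:diagonal} and Corollary \ref{cor1:diagonal}, show that a limit point of $\sigma(T)$ cannot be approached from above so it must be the limit of an increasing sequence of eigenvalues, and then invoke Lemma \ref{lemma:equal} to force any two limit points to coincide. The only (harmless) differences are that you rule out strictly decreasing sequences by citing the $\inf=\min$ property of Corollary \ref{cor1:diagonal} rather than re-running the paper's subspace construction with $H_0=[u_{n_k}]$, and that you make explicit the mutual orthogonality of the two eigenvector sequences needed in the proof of Lemma \ref{lemma:equal}, a point the paper leaves implicit.
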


\begin{proof}
	Since $T \in \mathcal{AM^+}(H)$, by Corollary \ref{cor1:diagonal}, we have,
	\begin{equation*}
		Tx=\displaystyle \sum_{\lambda \in \Lambda}\alpha_{\lambda} \langle x, u_{\lambda}\rangle u_{\lambda}, \forall x\in H,
	\end{equation*}
	where $\{u_{\lambda}\colon \lambda \in \Lambda\}$ is an orthonormal basis for $H$ such that $Tu_{\lambda}=\alpha_{\lambda}u_{\lambda}, \forall \lambda \in \Lambda$. Hence the spectrum $\sigma (T)$ of $T$, is the closure of $\{\alpha_{\lambda}\}_{\lambda \in \Lambda}$, (see \cite[Problem 63, page 34]{hal}).
	
	Let `$a$' be any limit point of $\sigma (T)$. We prove that there exists an increasing sequence $(a_n)_{n\in \mathbb{N}}\subseteq \{ \alpha_{\lambda}\colon \lambda \in \Lambda \}\text{ such that}\, \, a_n \rightarrow a$. It is enough to prove that there are at most only finitely many terms of the sequence $(a_n)_{n\in \mathbb{N}}$ that are strictly greater than  $a$. Suppose not, for the moment, let us assume that there are infinitely many such terms. This implies, there exists a decreasing subsequence $(a_{n_k})$ such that $a_{n_k} \rightarrow a$ and for each $n_k \in \mathbb{N},\, a_{n_k} > a$. Let $H_0:= \left[ u_{n_k}\right]$, where  $\{u_{n_k}\}\subseteq \{u_{\lambda}\colon \lambda \in \Lambda\} $. Then $H_0$ is a closed subspace of $H$ and hence a Hilbert space by itself. We have $T|_{H_0}$ is positive and by \cite[Proposition 2.1]{ram} we get $m(T|_{H_0})=\text{inf} \{|a_{n_k}|\}= a$. However, for every $x= \displaystyle \sum_{n_k}s_{n_k}u_{n_k} \in H_0 $ with ${\|x\|}^2=\displaystyle \sum_{k}{|s_{n_k}|}^2=1$, we have
	\begin{equation*}
		{\|T|_{H_0}x\|}^2= {\|\displaystyle \sum_{n_k}s_{n_k}a_{n_k} u_{n_k} \|}^2= \displaystyle \sum_{n_k}{|s_{n_k}|}^2{|a_{n_k}|}^2  > {a}^2 \sum_{n_k}{|s_{n_k}|}^2= {a}^2.
	\end{equation*}
	This implies that $\|T|_{H_0}x\|>m(T|_{H_0})=a$ for every $x\in S_{H_0}$, which means that $T$ is not minimum attaining on $H_0$, a contradiction to $T\in \mathcal{AM}(H)$. This proves our claim.
	
	Next, let $a$ and $b$ be any two limit points (if exist) of the spectrum $\sigma (T)$ of $T$. By the discussion in the above paragraph, there exist two increasing sequences $(a_{n})_{n\in \mathbb{N}}\subseteq \{\alpha_{\lambda}\}_{\lambda \in \Lambda}$, $(b_{n})_{n\in \mathbb{N}}\subseteq \{\alpha_{\lambda}\}_{\lambda \in \Lambda} $ with corresponding orthonormal sequences of eigenvectors $\{f_{n}\}_{n\in \mathbb{N}}\subseteq \{u_{\lambda}\}_{\lambda \in \Lambda}  $ , $\{g_{n}\}_{n\in \mathbb{N}}\subseteq \{u_{\lambda}\}_{\lambda \in \Lambda}$ such that $a_n\rightarrow a$ and $b_n\rightarrow b$. Then by applying Lemma \ref{lemma:equal}, we get $a=b$. This shows that the limit point (if it exists) of the spectrum $\sigma (T)$ of $T$ is unique.
	\end{proof}

\begin{corollary}
	Let $T\in \mathcal{AM^+}(H)$. Then  $\sigma_p(T)$ is always a countable set.
\end{corollary}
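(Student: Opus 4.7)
The plan is to derive this corollary as a straightforward consequence of Proposition \ref{prop:onelimitpoint}, combined with an elementary topological observation about compact subsets of $\mathbb{R}$.

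First I would note that for any bounded operator, $\sigma_p(T) \subseteq \sigma(T)$, so it suffices to show that $\sigma(T)$ itself is countable. Since $T \in \mathcal{AM}^+(H)$ is bounded and self-adjoint, $\sigma(T)$ is a compact subset of $\mathbb{R}$, and by Proposition \ref{prop:onelimitpoint} it has at most one limit point. Call this unique limit point $a$ (if it does not exist, the argument only simplifies).

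Next I would show that any compact subset $S \subseteq \mathbb{R}$ with at most one limit point $a$ is countable. For each $n \in \mathbb{N}$, consider $S_n := S \setminus (a - 1/n, a + 1/n)$. Then $S_n$ is closed, bounded, and contains no limit point of $S$, hence no limit point of $S_n$ either. A compact set with no limit points is finite, so each $S_n$ is finite. Since
\begin{equation*}
S = \{a\} \cup \bigcup_{n=1}^{\infty} S_n,
\end{equation*}
$S$ is a countable union of finite sets together with a singleton, hence countable.

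Applying this to $S = \sigma(T)$ yields that $\sigma(T)$ is countable, and therefore $\sigma_p(T) \subseteq \sigma(T)$ is countable. There is no real obstacle here; the content of the corollary lies entirely in Proposition \ref{prop:onelimitpoint}, and the present step is merely the conversion of ``at most one limit point in a compact set'' into ``countability''. Alternatively, one could argue directly from Corollary \ref{cor1:diagonal}: the set of \emph{distinct} eigenvalues $\{\alpha_\lambda : \lambda \in \Lambda\}$ is exactly $\sigma_p(T)$, and the same topological decomposition applied to this set (whose closure is $\sigma(T)$) gives the conclusion.
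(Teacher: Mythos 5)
Your proof is correct and rests on exactly the same pillar as the paper's: Proposition \ref{prop:onelimitpoint} plus the elementary fact that a bounded set of reals with at most one limit point is countable. The paper phrases this step contrapositively (an uncountable subset of $\mathbb{R}$ must have at least two limit points) while you give the direct decomposition $S=\{a\}\cup\bigcup_n S_n$ into finite pieces, but these are the same argument in different clothing.
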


\begin{proof}
	Suppose our claim is not true. Then the spectrum $\sigma(T)$ of $T$ will be an uncountable subset of $\mathbb{R}$ . So by the fact that {\it ``every uncountable subset of real numbers must have at least two limit points''}, $\sigma(T)$ will have two limit points, which is a contradiction to Proposition \ref{prop:onelimitpoint}. Therefore $\sigma_p(T)$ must be always a countable set.
\end{proof}

\begin{corollary}
	Let $T\in \mathcal{AM^+}(H)$. Then $T$ can have at most one eigenvalue with infinite multiplicity.
\end{corollary}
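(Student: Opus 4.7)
The plan is to argue by contradiction and apply Lemma \ref{lemma:equal} with constant sequences. Suppose $T$ had two distinct eigenvalues $\lambda_1 \neq \lambda_2$, each of infinite multiplicity. Since $T \geq 0$, both are non-negative, and without loss of generality $\lambda_1 < \lambda_2$. The eigenspaces $N(T - \lambda_1 I)$ and $N(T - \lambda_2 I)$ are then infinite-dimensional Hilbert spaces, so each admits an orthonormal sequence; call them $\{f_n\}_{n \in \mathbb{N}}$ and $\{g_n\}_{n \in \mathbb{N}}$, satisfying $Tf_n = \lambda_1 f_n$ and $Tg_n = \lambda_2 g_n$ for all $n$.

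Next, I would take the constant sequences of eigenvalues $a_n := \lambda_1$ and $b_n := \lambda_2$. These are (weakly) increasing, converge to $\lambda_1$ and $\lambda_2$ respectively, and come with corresponding orthonormal eigenvector sequences $\{f_n\}$ and $\{g_n\}$. Feeding this setup into Lemma \ref{lemma:equal} forces $\lambda_1 = \lambda_2$, contradicting the assumption, and concluding the proof.

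The only point that requires checking is whether Lemma \ref{lemma:equal} genuinely covers the case of constant sequences. Inspecting its proof, monotonicity is used only to arrange $a_n < b_n$ for all $n$ after a possible re-indexing, and this is automatic here from $\lambda_1 < \lambda_2$. The rest of the argument—constructing $H_0 = [t_n f_n + \sqrt{1 - t_n^2}\, g_n : n \in \mathbb{N}]$ with $t_n \in (0,1)$ chosen so that $t_n^2 \lambda_1^2 + (1 - t_n^2)\lambda_2^2 = c_n^2$ for an auxiliary strictly decreasing sequence $c_n \downarrow \lambda_1$, and then verifying that $m(T|_{H_0}) = \lambda_1$ is not attained—goes through verbatim in the constant case. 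This bookkeeping is the only mild obstacle; were it to prove delicate, I could simply repeat the construction as a streamlined specialization of the lemma's argument instead of citing it directly.
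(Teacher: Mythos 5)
Your argument is correct and is essentially identical to the paper's own proof: the paper likewise treats the two eigenvalues as limits of constant (hence weakly increasing) sequences of eigenvalues with the corresponding infinite orthonormal systems of eigenvectors and invokes Lemma \ref{lemma:equal} to force the two values to coincide. Your extra check that the lemma's proof only uses monotonicity to arrange $a_n<b_n$, which is automatic for constant sequences, is a sensible verification but not a departure from the paper's route.
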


\begin{proof}
	Let $a$ and $b$ be two eigenvalues(if exist) of $T$ with infinite multiplicity. Let $\{f_n\}_{n\in \mathbb{N}}$ and $\{g_n\}_{n\in\mathbb{N}}$ be two infinite sequences of orthonormal eigenvectors corresponding to the eigenvalues $a$ and $b$. Consider the two sequences $\{a_n\}_{n\in \mathbb{N}}$ and $\{b_n\}_{n\in \mathbb{N}}$  such that $a_n=a,\,\forall n\in \mathbb{N}$ and $b_n=b,\,\forall n\in \mathbb{N}$. Obviously, the sequences $\{a_n\}_{n\in \mathbb{N}}$ and $\{b_n\}_{n\in \mathbb{N}}$ are increasing to $a$ and $b$ respectively. Then by applying Lemma \ref{lemma:equal}, we must have $a=b$. Therefore $T$ can have at most one eigenvalue with infinite multiplicity.
\end{proof}
	

\begin{corollary}
	Let $T\in \mathcal{AM^+}(H)$. If $\sigma(T)=\overline{\{\alpha_{\lambda}\}}_{\lambda \in \Lambda}$ has a limit point $\alpha$ and an eigenvalue with infinite multiplicity $\hat{\alpha}$ then, $\alpha=\hat{\alpha}$.
\end{corollary}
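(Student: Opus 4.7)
The plan is to reduce this corollary directly to Lemma \ref{lemma:equal}, exactly in the spirit of the preceding corollary about multiplicities. By Corollary \ref{cor1:diagonal}, $T$ is diagonalized by an orthonormal basis $\{u_\lambda\}_{\lambda\in\Lambda}$ with eigenvalues $\{\alpha_\lambda\}_{\lambda\in\Lambda}$, and $\sigma(T)=\overline{\{\alpha_\lambda\}}_{\lambda\in\Lambda}$. The idea is to produce two non-decreasing sequences of eigenvalues converging to $\alpha$ and to $\hat{\alpha}$, each with an orthonormal sequence of corresponding eigenvectors, and then invoke Lemma \ref{lemma:equal}.

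First I would handle the limit point $\alpha$. By the ``moreover'' part of Proposition \ref{prop:onelimitpoint}, $\alpha$ is the limit of an increasing sequence $(a_n)_{n\in\mathbb{N}}\subseteq\{\alpha_\lambda\}_{\lambda\in\Lambda}$ of eigenvalues of $T$. Each $a_n$ being an eigenvalue of a positive diagonalizable operator, one can pick a unit eigenvector $f_n=u_{\lambda_n}$ from the basis; because the $a_n$ are pairwise distinct (they form an increasing sequence converging to $\alpha$, so at worst finitely many could coincide — in which case one passes to a strict subsequence, still converging to $\alpha$), the $\{f_n\}$ are orthonormal.

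Next I would handle the eigenvalue $\hat{\alpha}$ of infinite multiplicity. Choose an infinite orthonormal sequence $\{g_n\}_{n\in\mathbb{N}}$ of eigenvectors of $T$ corresponding to $\hat{\alpha}$, and set $b_n=\hat{\alpha}$ for every $n$. Then $(b_n)_{n\in\mathbb{N}}$ is (non-strictly) increasing and converges to $\hat{\alpha}$, with $Tg_n=b_n g_n$. This constant-sequence construction is precisely what was used in the preceding corollary on eigenvalues of infinite multiplicity, so the convention that ``increasing'' in Lemma \ref{lemma:equal} allows non-strict monotonicity is already implicit in the paper.

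Finally, Lemma \ref{lemma:equal} applied to the pair $((a_n),\{f_n\})$ and $((b_n),\{g_n\})$ forces $\alpha=\hat{\alpha}$. The only delicate point — and the one place I would want to double-check — is the mild abuse of the word ``increasing'' for the constant sequence $(b_n)$; if strict monotonicity is required, one instead repeats the argument inside the proof of Lemma \ref{lemma:equal} directly, with $b_n\equiv\hat{\alpha}$ substituted, noting that the convex combination ${t_n}^2 a_n^2+(1-t_n^2)\hat{\alpha}^2$ still sweeps the interval $[a_n^2,\hat{\alpha}^2]$ for $t_n\in[0,1]$ (assuming without loss that $\alpha<\hat{\alpha}$, the other case being symmetric), and the same contradiction with minimum attainment on $H_0=[t_n f_n+\sqrt{1-t_n^2}\,g_n:n\in\mathbb{N}]$ is obtained. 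Either way, the conclusion $\alpha=\hat{\alpha}$ follows.
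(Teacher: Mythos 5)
Your proposal is correct and follows essentially the same route as the paper: the paper likewise takes the increasing sequence $(a_n)$ converging to $\alpha$ from Proposition \ref{prop:onelimitpoint}, the constant sequence $b_n=\hat{\alpha}$ with an infinite orthonormal family of eigenvectors for $\hat{\alpha}$, and concludes by Lemma \ref{lemma:equal}. The point you flag about non-strict monotonicity is handled in the paper exactly as you anticipate, by treating the constant sequence as ``increasing to $\hat{\alpha}$.''
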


\begin{proof}
	Since $\alpha$ is a limit point of $\sigma(T)$, by Proposition \ref{prop:onelimitpoint}, there exists an increasing sequence $\{a_{n}\}_{n\in \mathbb{N}}\subseteq \{\alpha_{\lambda}\}_{\lambda \in \Lambda} $  such that $a_n \rightarrow \alpha $. Let $\{b_{n}\}_{n\in \mathbb{N}}\subseteq \{\alpha_{\lambda}\}_{\lambda \in \Lambda} $ be the constant sequence such that $b_n=\hat{\alpha},\, \forall n\in \mathbb{N}$. Let us denote by $\{f_n\}_{n\in \mathbb{N}}$ and $\{g_n\}_{n\in \mathbb{N}}$ the orthonormal sequence of eigenvectors corresponding to the eigenvalues $\{a_n\}_{n\in \mathbb{N}}$ and $\{b_n\}_{n\in \mathbb{N}}$ respectively. Clearly,  $b_n$ is increasing to $\hat{\alpha}$. Now, by applying Lemma \ref{lemma:equal}, we get $\alpha=\hat{\alpha}$.
\end{proof}
All the results discussed above concerning the spectrum of a positive absolutely minimum attaining operator put together will lead to the following theorem.
\begin{theorem}(compare with \cite[Theorem 3.8]{sat})
	\label{thm:spectrum}
	Let $T\in \mathcal{AM^+}(H)$. Then we have,
	\begin{equation*}
		Tx=\displaystyle \sum_{\lambda \in \Lambda}\alpha_{\lambda} \langle x, u_{\lambda}\rangle u_{\lambda}, \forall x\in H,
	\end{equation*}
	where $\{u_{\lambda}\colon \lambda \in \Lambda\}$ is an orthonormal basis for $H$ such that $Tu_{\lambda}=\alpha_{\lambda}u_{\lambda}, \forall \lambda \in \Lambda$ and the following hold true:
	\begin{enumerate}
		\item for every nonempty subset $\Gamma$ of $\Lambda$, we have $\inf \{\alpha_{\lambda}\colon\lambda \in \Gamma\}=\min\{\alpha_{\lambda}\colon\lambda \in\Gamma\}$;
		\item the spectrum $\sigma (T)=\overline{\{\alpha_{\lambda} \}}_{\lambda \in \Lambda}$ of $T$ has at most one limit point. Moreover, this unique limit point (if it exists) can only be the limit of an increasing sequence in the spectrum;
		\item the point spectrum $\sigma_p(T)$ of $T$ is countable and there can exist at most one eigenvalue for $T$ with infinite multiplicity;
		\item if the spectrum $\sigma (T)$ of $T$ has both, a limit point $\alpha$ and an eigenvalue $\hat{\alpha}$ with infinite multiplicity, then $\alpha=\hat{\alpha}$.
	\end{enumerate}
\end{theorem}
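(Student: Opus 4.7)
The plan is to recognize that Theorem \ref{thm:spectrum} is a compilation theorem: every one of its assertions has been proved in the results leading up to it, and what remains is simply to assemble them and state the representation formula. So the proof I would write is essentially a pointer-style proof that invokes the earlier work in the correct order.

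First, I would invoke Theorem \ref{thm:diagonal} to obtain an orthonormal basis $\{u_\lambda : \lambda \in \Lambda\}$ of $H$ consisting of eigenvectors of $T$, with $Tu_\lambda = \alpha_\lambda u_\lambda$ and $\alpha_\lambda \geq 0$ for all $\lambda$. Expanding any $x \in H$ with respect to this basis and using boundedness of $T$ (as in the computation of Corollary \ref{cor1:diagonal}) immediately yields the representation
\begin{equation*}
Tx = \sum_{\lambda \in \Lambda} \alpha_\lambda \langle x, u_\lambda \rangle u_\lambda, \quad \forall x \in H.
\end{equation*}
At this point each of the four numbered claims is handed to us by a named earlier result. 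Claim (1) is the second half of Corollary \ref{cor1:diagonal}. Claim (2) is precisely Proposition \ref{prop:onelimitpoint}, noting that $\sigma(T) = \overline{\{\alpha_\lambda\}}_{\lambda \in \Lambda}$ follows from diagonalizability (as cited from Halmos). Claim (3) combines the two corollaries immediately following Proposition \ref{prop:onelimitpoint}: the first gives countability of $\sigma_p(T)$ via the ``every uncountable subset of $\mathbb{R}$ has two limit points'' principle together with the one-limit-point result, and the second gives uniqueness of any eigenvalue of infinite multiplicity (by applying Lemma \ref{lemma:equal} to constant sequences). Claim (4) is the last corollary before the theorem, again obtained by feeding one constant sequence and one increasing-to-a-limit-point sequence into Lemma \ref{lemma:equal}.

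Since the theorem is a synthesis, there is no new mathematical content to generate; the proof is a matter of organizing citations. The real difficulty, which was already absorbed in the preceding development, lies in Lemma \ref{lemma:equal}: the trick of forming $H_0 = \overline{\mathrm{span}}\{t_n f_n + \sqrt{1-t_n^2}\, g_n\}$ with weights $t_n$ tuned so that $t_n^2 a_n^2 + (1-t_n^2)b_n^2 = c_n^2$ for a strictly decreasing auxiliary sequence $c_n \downarrow a$, thereby making the infimum on $H_0$ unattained. Every structural claim in Theorem \ref{thm:spectrum} ultimately rests on this construction, so if I were proving the theorem from scratch this is the step I would expect to consume essentially all of the effort. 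Given the lemma, the remaining deductions are routine, and the proof reduces to three or four lines of references.
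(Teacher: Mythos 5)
Your proposal matches the paper exactly: the paper gives no separate proof of Theorem \ref{thm:spectrum}, stating only that ``all the results discussed above\dots put together will lead to the following theorem,'' and your attribution of the representation formula to Theorem \ref{thm:diagonal}, claim (1) to Corollary \ref{cor1:diagonal}, claim (2) to Proposition \ref{prop:onelimitpoint}, and claims (3) and (4) to the three subsequent corollaries is precisely the intended assembly. Your observation that the real work is concentrated in Lemma \ref{lemma:equal} is also accurate.
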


\section{Characterization}
In this section we prove a characterization theorem for positive absolutely minimum attaining operators that is similar to \cite[Theorem 5.1]{sat}. First we discuss some sufficient conditions to be satisfied by this class of operators.
\begin{lemma}
	\label{lemma:finiterank}
	Let $F\in \mathcal{B}(H)$ be a self-adjoint finite rank operator. Then for every $\alpha \geq 0$, we have $\alpha I - F\in \mathcal{M}(H)$.
\end{lemma}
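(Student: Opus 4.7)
The plan is to reduce $m(\alpha I - F)$ to a finite-dimensional minimisation via the spectral decomposition of $F$, and then exhibit a minimising unit vector explicitly. Since $F$ is self-adjoint and of finite rank, it is compact, so the spectral theorem (Theorem 2.2) provides a finite orthonormal family $\{e_1,\dots,e_n\} \subset H$ and nonzero real numbers $\lambda_1,\dots,\lambda_n$ with $F = \sum_{i=1}^{n}\lambda_i\langle \cdot, e_i\rangle e_i$. I set $M := \left[e_1,\dots,e_n\right]^{\perp}$; because $H$ is infinite dimensional while $\{e_1,\dots,e_n\}$ is finite, $M \neq \{0\}$, and clearly $M \subseteq N(F)$.

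Next, every unit vector $x \in H$ decomposes orthogonally as $x = x_0 + \sum_{i=1}^{n} c_i e_i$ with $x_0 \in M$ and $\|x_0\|^2 + \sum_{i=1}^{n}|c_i|^2 = 1$. Then $(\alpha I - F)x = \alpha x_0 + \sum_{i=1}^{n}(\alpha - \lambda_i)c_i e_i$, and the orthogonality of the decomposition yields
\[\|(\alpha I - F)x\|^2 = \alpha^2\|x_0\|^2 + \sum_{i=1}^{n}(\alpha - \lambda_i)^2 |c_i|^2 \geq \mu^2,\]
where $\mu := \min\{\alpha,\,|\alpha - \lambda_1|,\dots,|\alpha - \lambda_n|\}$. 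Hence $m(\alpha I - F) \geq \mu$.

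It remains to produce a unit vector that attains $\mu$. If the minimum defining $\mu$ is realised by $\alpha$, I choose any unit $x_0 \in M$ (possible since $M \neq \{0\}$), for which $\|(\alpha I - F)x_0\| = \alpha = \mu$; if it is realised by $|\alpha - \lambda_k|$ for some $k$, I take $x = e_k$, for which $\|(\alpha I - F)e_k\| = |\alpha - \lambda_k| = \mu$. In either case $m(\alpha I - F) = \mu$ and is attained, so $\alpha I - F \in \mathcal{M}(H)$. There is no serious obstacle in this argument: the compact self-adjoint spectral theorem collapses the problem to a finite computation, and the only place the infinite dimensionality of $H$ enters is to guarantee $M \neq \{0\}$, which is exactly what is needed to realise the value $\alpha$ as a norm on the unit sphere whenever that value dominates the finite list.
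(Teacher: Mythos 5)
Your proof is correct and follows essentially the same route as the paper: diagonalize $F$ via the spectral theorem, identify the candidate minimum $\min\{\alpha,\,|\alpha-\lambda_1|,\dots,|\alpha-\lambda_n|\}$, and exhibit an eigenvector (or a unit vector orthogonal to the range of $F$) attaining it. The only difference is that where the paper cites the identity $m(T)=d\left(0,\sigma(T)\right)$ for self-adjoint $T$ from an external reference, you verify the lower bound $\|(\alpha I-F)x\|\geq\mu$ by a direct orthogonal-decomposition computation, which makes the argument self-contained but is not a different method.
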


\begin{proof}
	Let the range of $F$ be $k$-dimensional. Since $F$ is self-adjoint, by the spectral theorem there exists an orthonormal basis $B=\{ u_{\lambda}\colon \lambda \in \Lambda \} $ for $H$ corresponding to which the matrix of $F$ is  diagonal with $k$ non zero real diagonal entries, say $\{\alpha_1, \alpha_2,\dots,\alpha_k\}$. This implies that the matrix of $T:=\alpha I-F$ with respect to $B$ is also diagonal and consequently, $\sigma (T)=\{\alpha-\alpha_1,\alpha-\alpha_2, \alpha-\alpha_3, \dots, \alpha-\alpha_k, \alpha\}$. Note that  $T$ is self-adjoint. Now by using \cite[Proposition 2.1]{ram} we get,
	\begin{align*}
		m(T)& =d\left(0,\sigma(T)\right)\\
		& =\inf\{|\alpha-\alpha_1|,|\alpha-\alpha_2|, |\alpha-\alpha_3|,\dots, |\alpha-\alpha_k|, \alpha\}\\
		& = \min\{|\alpha-\alpha_1|,|\alpha-\alpha_2|, |\alpha-\alpha_3|,\dots, |\alpha-\alpha_k|, \alpha\}.
	\end{align*}
	It immediately follows that $T$ attains its minimum at  $u_{\lambda_0}$ for some $\lambda_0\in \Lambda$.
\end{proof}

Let $M$ be any closed subspace of $H$ and $i_M\colon M\rightarrow H$ be the inclusion map from $M$ to $H$, which is defined as $i_Mx= x, \forall x\in M$. Then it is easy to observe that the adjoint of $i_M$ is the map $i_M^*\colon H\rightarrow M$, which is defined as,

$$i_{M}^*(x)=\begin{cases}
x\ , & \text{if}\ x\in  M, \\
0\ , & \text{if}\ x\in  M^\perp.
\end{cases}$$

\begin{proposition}
	\label{prop:equivalent}
	Let $T\in \mathcal{B}(H)$. Then the following are equivalent;
	\begin{enumerate}
		\item $T\in \mathcal{AM}(H)$
		\item  $Ti_M\in \mathcal{M}(M, H)$ for every nonzero closed subspace $M$ of $H$
		\item  $i_M^*(T^*T)i_M \in \mathcal{M}(M)$ for every nonzero closed subspace $M$ of $H$.
	\end{enumerate}
\end{proposition}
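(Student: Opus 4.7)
The plan is to prove the chain (1) $\Leftrightarrow$ (2) $\Leftrightarrow$ (3), with the first equivalence essentially a matter of translating the definition of $\mathcal{AM}(H)$ via the inclusion map, and the second equivalence being an application of Proposition \ref{prop1} to the operator $T i_M$.

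First I would observe that for every nonzero closed subspace $M$ of $H$, the restriction $T|_M \colon M \to H$ and the composition $T i_M \colon M \to H$ coincide as operators, because $i_M x = x$ for all $x \in M$. In particular $m(T|_M) = m(T i_M)$ and $T|_M$ attains its minimum on $S_M$ if and only if $T i_M$ does. Since by definition $T \in \mathcal{AM}(H)$ means $T|_M \in \mathcal{M}(M, H)$ for every nonzero closed subspace $M$ of $H$, this immediately yields (1) $\Leftrightarrow$ (2).

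For (2) $\Leftrightarrow$ (3), fix a nonzero closed subspace $M$ and apply Proposition \ref{prop1} with $H_1 = M$, $H_2 = H$, and the operator $S := T i_M \in \mathcal{B}(M, H)$. The proposition gives $S \in \mathcal{M}(M, H)$ if and only if $S^*S \in \mathcal{M}(M)$. Since $S^*S = (T i_M)^*(T i_M) = i_M^* T^* T i_M$, this yields $T i_M \in \mathcal{M}(M, H)$ iff $i_M^*(T^*T) i_M \in \mathcal{M}(M)$. Running this equivalence over all nonzero closed subspaces $M$ of $H$ produces (2) $\Leftrightarrow$ (3).

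There is really no significant obstacle here; the statement is a clean reformulation that packages the definition of absolute minimum attaining into a form involving the positive operator $T^*T$, so that tools applicable to positive operators (notably Theorem \ref{thm:spectrum}) can be brought to bear. The only small point to be careful about is that Proposition \ref{prop1} as stated applies to operators between possibly different Hilbert spaces, which is exactly what is needed when $M \neq H$.
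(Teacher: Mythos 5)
Your proof is correct and follows exactly the route the paper takes: the equivalence of (1) and (2) via the identification $T|_M = Ti_M$, and the equivalence of (2) and (3) by applying Proposition \ref{prop1} to $Ti_M \in \mathcal{B}(M,H)$ together with the computation $(Ti_M)^*(Ti_M) = i_M^*T^*Ti_M$. No issues.
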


\begin{proof}
	$(1)\Leftrightarrow (2)\colon$
	The proof is direct from the definition of absolutely minimum attaining operator, if we observe that $T|_M=Ti_M$ for every closed subspace of $M$ of $H$.\\
	$(2)\Leftrightarrow (3)\colon$
	It is a direct consequence of Proposition \ref{prop1}
\end{proof}

\begin{theorem}
	\label{thm:finiterank}
	Let  $F\in \mathcal{B}(H)$ be a finite rank operator. Then for every $\alpha \geq 0$ we have $\alpha I-F \in \mathcal{AM}(H)$.
\end{theorem}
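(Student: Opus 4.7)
The plan is to reduce to the self-adjoint case already handled by Lemma \ref{lemma:finiterank}, via the compression characterization in Proposition \ref{prop:equivalent}. Set $T := \alpha I - F$. By Proposition \ref{prop:equivalent}, it suffices to prove that $i_M^{*}(T^{*}T)i_M \in \mathcal{M}(M)$ for every nonzero closed subspace $M$ of $H$.

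Next, I would expand
\begin{equation*}
T^{*}T = (\alpha I - F^{*})(\alpha I - F) = \alpha^{2} I - G,
\end{equation*}
where $G := \alpha F + \alpha F^{*} - F^{*}F$. Since $F$ is finite rank, so are $F^{*}$ and $F^{*}F$, hence $G$ is a finite rank operator; moreover $G^{*}=G$. Using $i_M^{*}i_M = I_M$, I compute
\begin{equation*}
i_M^{*}(T^{*}T)i_M = \alpha^{2} I_M - i_M^{*} G\, i_M.
\end{equation*}
The compression $i_M^{*} G\, i_M$ is a self-adjoint operator on $M$ (the adjoint relation follows immediately from $G^{*}=G$) and has finite rank because its range is contained in $i_M^{*}(R(G))$, which is finite-dimensional.

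Since $\alpha^{2} \geq 0$ and $i_M^{*} G\, i_M$ is a self-adjoint finite rank operator on the Hilbert space $M$, Lemma \ref{lemma:finiterank} applied on $M$ (with the non-negative scalar $\alpha^{2}$) yields that $\alpha^{2} I_M - i_M^{*} G\, i_M \in \mathcal{M}(M)$. As this holds for every nonzero closed subspace $M$, Proposition \ref{prop:equivalent} gives $T = \alpha I - F \in \mathcal{AM}(H)$, completing the argument.

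I do not foresee a serious obstacle: the only point that needs care is verifying that the Hilbert space on which Lemma \ref{lemma:finiterank} is applied is $M$ itself (which is infinite dimensional or finite dimensional — the lemma is formulated for $H$, but its proof via spectral reduction of a self-adjoint finite rank operator works identically on any Hilbert space, since in the separable or non-separable setting the spectral theorem still produces a diagonalizing orthonormal basis of eigenvectors and the spectrum remains the same finite set). This transfer is the only technicality, and everything else is an algebraic manipulation together with the characterization of absolute minimum attaining operators.
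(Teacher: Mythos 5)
Your proposal is correct and is essentially identical to the paper's own proof: the paper also writes $T^*T=\alpha^2 I-\widetilde F$ with $\widetilde F=\alpha(F+F^*)-F^*F$ self-adjoint of finite rank, compresses to $M$ via Proposition \ref{prop:equivalent}, and invokes Lemma \ref{lemma:finiterank} on $M$. Your closing remark about transferring the lemma to the subspace $M$ is a fair observation, and the same (implicit) transfer is made in the paper.
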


\begin{proof}
	Let $T:=\alpha I-F$. Then we have $T^*=\alpha I-F^*$ and $T^*T=\beta I-\widetilde{F}$ where $\beta= {\alpha}^2$ and $\widetilde{F}= \alpha (F+F^*)-F^*F$ is a self-adjoint finite rank operator. Using Proposition \ref{prop:equivalent},
	it suffices to show that for every closed subspace $M$ of $H$, $i_{M}^*(\beta I-\widetilde{F})i_M \in \mathcal{M}(M)$. But $i_{M}^*(\beta I-\widetilde{F})i_M$ is a operator from the Hilbert space $M$ to itself and $i_{M}^*(\beta I-\widetilde{F})i_M=\beta(i_{M}^* I i_M)- i_{M}^*\widetilde{F}i_M=\beta I_M-\widetilde{F}_{M}$, where $\beta \geq 0$, ${I}_{M}$ is the identity operator on $M$ and $i_{M}^*\widetilde{F}i_M=\widetilde{F}_{M}$ is a self-adjoint finite rank operator on $M$. Now, Lemma \ref{lemma:finiterank} implies that $\beta I_M-\widetilde{F}_{M}\in \mathcal{M}(M)$. Hence the theorem.
\end{proof}
\begin{remark}
	As a particular case of the above theorem, it follows that $\alpha I-F \in \mathcal{M}(H)$, where $\alpha \geq 0$ and $F$ is any finite rank operator not necessarily self-adjoint.
\end{remark}

We know that finite rank operators, unitary operators and isometries are absolutely minimum attaining and the modulus of these operators is either a positive finite rank operator or the identity operator. In the first case, $0$ is  the eigenvalue with infinite multiplicity  and in the second case, $1$ is the eigenvalue with infinite multiplicity. Let $T\in \mathcal{AM^+}(H)$ and $\lambda$ be the eigenvalue of $T$ with infinite multiplicity. In general it is not true that, always either $\lambda=m(T)$ or $\lambda=\left\|T\right\|$. We have the following example to illustrate this.




\begin{example}
	Let $F:\ell^2\rightarrow\ell^2$ be defined by
	$$F(e_{n})=\begin{cases}
	\,\,\,\,\, e_n\ , & \text{if}\ n=1, 3, 5. \\
	-e_n\ , & \text{if}\ n=2, 4.\\
	\,\,\,\,\, 0 \ , & \text{if}\ n\geq 6.
	\end{cases}$$
	Consider the operator $T:=I-F$. Then we have  $T\geq 0$  and
	$T\in \mathcal{AM}(H)$ by Theorem \ref{thm:finiterank}. In this case, $1$ is the eigenvalue for $T$ with infinite multiplicity, which is different from $m(T)=0$ and $\left\|T\right\|=2$.
\end{example}

\begin{lemma}
	\label{lemma:finiterankcompact}
	Let $K\in \mathcal{B}(H)$ be a positive compact operator and $F\in \mathcal{B}(H)$ be a self-adjoint finite rank operator. Then for every $\alpha > 0$, we have $\alpha I-K+F \in \mathcal{M}(H)$.
\end{lemma}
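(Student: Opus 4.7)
My plan is to reduce to Lemma \ref{lemma:finiterank} when $K-F$ has finite rank, and otherwise to locate the minimum of $T:=\alpha I-(K-F)$ as an explicit eigenvalue. Writing $A:=K-F$, the operator $A$ is compact and self-adjoint, and $T=\alpha I-A$ is self-adjoint. By Proposition \ref{prop:eigenvalue} it will suffice to exhibit a unit vector $e_0$ with $T e_0=\pm m(T)\,e_0$.

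The technical heart of the argument, which I would carry out first, is to show that $A$ has at most finitely many strictly negative eigenvalues; this is the only place where the positivity of $K$ enters. Let $H_-$ denote the closed linear span of all eigenvectors of $A$ with strictly negative eigenvalue, so that $\langle Ax,x\rangle<0$ for every nonzero $x\in H_-$. Since $K\geq 0$, the identity $\langle Ax,x\rangle=\langle Kx,x\rangle-\langle Fx,x\rangle$ forces $\langle Fx,x\rangle>0$ for every $x\in H_-\setminus\{0\}$. Consequently the compression $P_{H_-}FP_{H_-}$ is a positive, injective operator on $H_-$, and so its rank equals $\dim H_-$. Since this rank is majorised by the rank of $F$, which is finite, I obtain $\dim H_-<\infty$.

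Next I would split into two cases. If $A$ has only finitely many nonzero eigenvalues then the spectral theorem for compact self-adjoint operators realises $A$ as a self-adjoint finite rank operator, and applying Lemma \ref{lemma:finiterank} with $A$ in place of $F$ gives $T\in\mathcal{M}(H)$ immediately. Otherwise, $A$ has infinitely many nonzero eigenvalues tending to zero, and by the previous step all but finitely many of them are strictly positive. Listing those positive eigenvalues as a sequence $\gamma_n\to 0$ with $\gamma_n>0$, for every sufficiently large $n$ one has $0<\alpha-\gamma_n<\alpha$, hence $\inf\{|\alpha-\mu|:\mu\in\sigma(A)\setminus\{0\}\}<\alpha$. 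Because $|\alpha-\gamma_n|\to\alpha$ and only finitely many eigenvalues of $A$ are negative, this infimum is in fact attained at some nonzero eigenvalue $\mu_0$ of $A$, with an associated unit eigenvector $e_0$.

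To conclude I would invoke $m(T)=d(0,\sigma(T))$ for the self-adjoint operator $T$ (as used in the proof of Lemma \ref{lemma:finiterank}), together with $\sigma(T)=\alpha-\sigma(A)$ and $0\in\sigma(A)$, to deduce $m(T)=|\alpha-\mu_0|$. Since $T e_0=(\alpha-\mu_0)e_0=\pm m(T)\,e_0$, Proposition \ref{prop:eigenvalue} finishes the proof. The main obstacle to watch for is the very first step: without the hypothesis $K\geq 0$, the operator $K-F$ could have a full sequence of strictly negative eigenvalues accumulating at $0$, in which case $m(T)$ would equal $\alpha$ but fail to be attained and the conclusion of the lemma would be false.
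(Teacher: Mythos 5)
Your proof is correct and follows the same overall architecture as the paper's: dispose of the case where $K-F$ has finite rank via Lemma \ref{lemma:finiterank}, and otherwise diagonalize the compact self-adjoint operator $K-F$, use $m(T)=d\left(0,\sigma(T)\right)$ together with the fact that the nonzero eigenvalues of $K-F$ accumulate only at $0$, and exhibit an eigenvector at which the minimum is attained so that Proposition \ref{prop:eigenvalue} applies. The one genuine point of divergence is the sub-step where the positivity of $K$ enters: the paper shows only that $K-F$ has \emph{at least one} strictly positive eigenvalue (otherwise $0\le K\le F$ would force the infinite-rank $K$ to be of finite rank), whereas you prove the stronger statement that the negative spectral subspace of $K-F$ is finite dimensional, by observing that the compression of $F$ to that subspace is positive and injective and hence has rank equal to its dimension yet bounded by the rank of $F$. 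Your version is in fact a little more robust: it guarantees infinitely many positive eigenvalues tending to $0$, hence some eigenvalue $\gamma$ with $0<\gamma<\alpha$ and therefore $d\left(0,\sigma(T)\right)\le|\alpha-\gamma|<\alpha$, which is exactly what is needed to ensure the distance is realized at a genuine nonzero eigenvalue of $K-F$ rather than at the accumulation point $\alpha$; a single positive eigenvalue, if it happened to exceed $2\alpha$, would not by itself yield this. Your closing remark correctly identifies that without $K\ge 0$ the conclusion fails.
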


\begin{proof}
	Firstly, if $K$ is of finite rank then from Lemma \ref{lemma:finiterank}, $T:=\alpha I-K+F \in \mathcal{M}(H)$.
	
	Next, assume that $K$ is of infinite rank. By the spectral theorem, there exists an orthonormal system of eigenvectors $\{ u_n\}_{n\geq 1}$ and
	corresponding eigenvalues $\{ \alpha_n\}_{n\geq 1}$ such that for all $x\in H$,
	\begin{equation}
		\label{eqn:spectralthm}
		(K-F)x=\displaystyle \sum_{n=1}^{\infty}\alpha_n \langle x, u_n\rangle u_n.
	\end{equation}
	Moreover, $\alpha_n\in \mathbb{R}, \, \forall n\in \mathbb{N}$ and $\{|\alpha_n|\}_{n\geq 1}$ is decreasing to $0$.
	Therefore for each $x\in H$, we have
	\begin{equation}
	\label{eqn:5.2}
		\langle (K-F)x, x\rangle =\displaystyle \sum_{n=1}^{\infty}\alpha_n {|\langle x, u_n\rangle|}^2.
	\end{equation}
	We claim that, there exists a $n_1$ such that $\alpha_{n_1}>0$. Suppose not, then by \eqref{eqn:5.2}, we have $0\leq K\leq F$. But $F$ is of finite rank and $K$ is positive, so it follows that $K$ is also of finite rank. In fact, for every $x\in {R(F)}^{\perp}=N(F)$ we have $0\leq \langle Kx, x\rangle \leq \langle Fx, x\rangle =0$ and so $\langle Kx, x\rangle=0$. Next, $K\geq 0$ implies that $Kx=0, \forall x\in  {R(F)}^{\perp}=N(F)$. Therefore we have $N(F)\subseteq N(K)$ and consequently $R(K)\subseteq R(F)$, which is a contradiction because $R(K)$ is infinite dimensional. Hence our claim is true.
	From Equation\eqref{eqn:spectralthm}, we have $\sigma(K-F)= \{ \alpha_n\}_{n=1}^{\infty} \cup \{0\}$ and the spectral mapping theorem gives that $\sigma(T)= \{ \alpha-\alpha_n\}_{n=1}^{\infty} \cup \{\alpha\}$. Now, \cite[Proposition 2.1]{ram} implies that $m(T)=d\left(0,\sigma(T)\right)=\text{inf}\{ \alpha, |\alpha-\alpha_n| \}_{n=1}^{\infty}$. But we know that $\{|\alpha_n|\}_{n\geq 1}$ is decreasing and $\alpha_{n_1}\geq 0$. This implies that $\alpha - \alpha_{n_1} \leq \alpha -\alpha_{n}, \, \forall n\geq n_1$. Next, $|\alpha_n| \rightarrow 0$ implies that there exists a $n_2$ such that $|\alpha_n|\leq \alpha$, $\forall n\geq n_2$. Consequently, $\alpha-\alpha_n\geq 0$, $\forall n\geq n_2$. Let $n_3= \text{max} \{n_1, n_2\}$. Then we have $|\alpha-\alpha_n| \geq |\alpha -\alpha_{n_3}|, \forall n\geq n_3$ and so $m(T)=\text{min}\{\alpha, |\alpha-\alpha_n| \}_{n=1}^{n_3}$. Clearly, $T$ attains its minimum either at $u_k$ for some $k \in \{1, 2, 3,\dots, n_3\}$ or at a unit vector in $N(K-F)$.
\end{proof}

\begin{theorem}
	\label{thm:sufficient}
	Let $K\in \mathcal{B}(H)$ be a positive compact operator and $F\in \mathcal{B}(H)$ be a finite rank operator. Then for every $\alpha \geq \frac {\|K\|}{2}$ we have $\alpha I-K+F \in \mathcal{AM}(H)$.
\end{theorem}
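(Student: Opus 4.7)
The plan is to verify the third condition of Proposition \ref{prop:equivalent}, namely that $i_M^*(T^*T)i_M \in \mathcal{M}(M)$ for every nonzero closed subspace $M$ of $H$, where $T := \alpha I - K + F$. The strategy is to rewrite $T^*T$ in a form to which Lemma \ref{lemma:finiterankcompact} applies.

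First I would dispose of the trivial case $\alpha = 0$: then $\|K\|/2 \leq \alpha = 0$ forces $K = 0$, so $T = F$, and $F \in \mathcal{AM}(H)$ follows by applying Theorem \ref{thm:finiterank} to the finite rank operator $-F$ with scalar $0$. Assume henceforth that $\alpha > 0$.

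Setting $A := \alpha I - K$ (self-adjoint), so that $T = A + F$, a direct expansion gives
\[
T^*T = A^2 + AF + F^*A + F^*F = \alpha^2 I - K(2\alpha I - K) + \widetilde F,
\]
where $\widetilde F := AF + F^*A + F^*F$ is self-adjoint (immediate from $A^* = A$) and of finite rank (since $F$ is). The key structural point is that $K' := K(2\alpha I - K)$ is a positive compact operator: compactness follows since $K$ is compact, and self-adjointness follows since $K$ and $2\alpha I - K$ commute and are both self-adjoint. Positivity is where the hypothesis $\alpha \geq \|K\|/2$ is used crucially, for it ensures $2\alpha I - K \geq 0$, and since $K \geq 0$ commutes with $2\alpha I - K$, the product of these two commuting positive operators is itself positive (e.g., via $K(2\alpha I - K) = K^{1/2}(2\alpha I - K)K^{1/2}$).

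Now for any nonzero closed subspace $M$, compression yields
\[
i_M^*(T^*T)i_M = \alpha^2 I_M - i_M^* K' i_M + i_M^* \widetilde F i_M,
\]
where $i_M^* K' i_M$ is positive and compact on $M$, while $i_M^* \widetilde F i_M$ is self-adjoint and of finite rank on $M$. Since $\alpha^2 > 0$, Lemma \ref{lemma:finiterankcompact} gives $i_M^*(T^*T)i_M \in \mathcal{M}(M)$, and Proposition \ref{prop:equivalent} then completes the proof. The main obstacle is the algebraic identification of the right decomposition of $T^*T$; once the positive compact piece $K(2\alpha I - K)$ is isolated using the hypothesis $\alpha \geq \|K\|/2$, the remainder of the argument reduces to the previous lemma and the compression framework of Proposition \ref{prop:equivalent}.
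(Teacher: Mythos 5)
Your proposal is correct and follows essentially the same route as the paper: the same decomposition $T^*T=\alpha^2 I-(2\alpha K-K^2)+\widetilde F$ (your $K(2\alpha I-K)$ and $AF+F^*A+F^*F$ are exactly the paper's $\widetilde K$ and $\widetilde F$), the same use of $\alpha\geq\|K\|/2$ to get positivity of the compact part, and the same reduction via Proposition \ref{prop:equivalent} and Lemma \ref{lemma:finiterankcompact}. No issues.
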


\begin{proof}
	Let $T:=\alpha I-K+F$. We prove the theorem in two cases as below.
	
	Case(I): $\alpha=0$
	
	In this case, $\alpha \geq \frac {\|K\|}{2}$ implies that $K=0$ and so $T$ is a finite rank operator. Therefore $T\in \mathcal{AM}(H)$.
	
	Case(II): $\alpha>0$
	
	We have $T^*T=\beta I-\widetilde{K}+\widetilde{F}$ where $\beta= {\alpha}^2$, $\widetilde{K}= 2\alpha K-K^2$  is a compact operator which is positive because $\alpha \geq \frac {\|K\|}{2}$  and $\widetilde{F}= \alpha (F +  F^*)-(KF+F^*K)+F^*F$ is a self-adjoint finite rank operator. Using Proposition \ref{prop:equivalent},
	it suffices to show that for every closed subspace $M$ of $H$, ${i_M}^*(\beta I-\widetilde{K}+\widetilde{F})i_M \in \mathcal{M}(M)$. But ${i_M}^*(\beta I-\widetilde{K}+\widetilde{F})i_M$ is an operator from the Hilbert space $M$ to itself and $i_{M}^*(\beta I-\widetilde{K}+\widetilde{F})i_M =\beta(i_{M}^* I i_M)- i_{M}^*\widetilde{K}i_M+i_{M}^*\widetilde{F}i_M=\beta I_M-\widetilde{K}_{M}+\widetilde{F}_M$, where, $\beta > 0$, $I_{M}$ is the identity operator on $M$, $\widetilde{K}_{M}=i_{M}^* \widetilde{K}i_M$ is a positive compact operator on $M$ and $\widetilde{F}_M=i_{M}^*\widetilde{F}i_M$ is a self-adjoint finite rank operator on $M$. Now, Lemma \ref{lemma:finiterankcompact} implies that $\beta I_M-\widetilde{K}_{M}+\widetilde{F}_{M}\in \mathcal{M}(M)$. Hence the theorem.
\end{proof}

    So far we could establish a number of sufficient conditions to be satisfied by absolutely minimum attaining operators, what follows is a necessary condition.

\begin{theorem}
	\label{thm:necessary}
	Let $T\in \mathcal{AM^+}(H)$. Then there exists a positive scalar $\alpha$, a positive compact operator $K$ and a  positive finite rank operator $F$ such that the following is true:
	\begin{enumerate}
		\item $T= \alpha I-K+F$;
		\item $\|K\| \leq \alpha$ and $KF=FK=0$.
	\end{enumerate}
\end{theorem}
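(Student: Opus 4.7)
The plan is to apply Theorem \ref{thm:spectrum} to diagonalize $T$ and then read off $K$ and $F$ from the diagonalization by separating the eigenvalues of $T$ into those lying below and above a carefully chosen scalar $\alpha$. The canonical choice of $\alpha$ is as follows: if $\sigma(T)$ has a limit point, let $\alpha$ be that (necessarily unique) limit point; otherwise $\sigma(T)$ is a bounded subset of $\mathbb{R}$ with no accumulation point and hence finite, and since $H$ is infinite-dimensional clause (3) of Theorem \ref{thm:spectrum} forces exactly one eigenvalue $\hat\alpha$ to have infinite multiplicity, in which case I set $\alpha:=\hat\alpha$. Clause (4) guarantees the two rules agree whenever both apply, so $\alpha$ is well-defined, and $\alpha\geq 0$ because every $\alpha_\lambda\geq 0$.

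Using the eigenbasis $\{u_\lambda:\lambda\in\Lambda\}$ of Corollary \ref{cor1:diagonal}, I would partition $\Lambda=\Lambda_-\sqcup\Lambda_0\sqcup\Lambda_+$ according to $\alpha_\lambda<\alpha$, $\alpha_\lambda=\alpha$, $\alpha_\lambda>\alpha$, and define diagonal operators
\begin{align*}
Kx &:= \sum_{\lambda\in\Lambda_-}(\alpha-\alpha_\lambda)\langle x,u_\lambda\rangle u_\lambda,\\
Fx &:= \sum_{\lambda\in\Lambda_+}(\alpha_\lambda-\alpha)\langle x,u_\lambda\rangle u_\lambda.
\end{align*}
Then $K,F\geq 0$ by construction; their supports lie in orthogonal parts of the basis, giving $KF=FK=0$; the identity $T=\alpha I-K+F$ reduces to a three-case verification on each $u_\lambda$; and the bound $\|K\|=\sup_{\lambda\in\Lambda_-}(\alpha-\alpha_\lambda)\leq\alpha$ is immediate from $\alpha_\lambda\geq 0$.

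The heart of the proof, and the step I expect to be the main obstacle, is verifying that $F$ is finite rank and that $K$ is compact. For $F$ it suffices to show $\Lambda_+$ is finite. By clause (3) of Theorem \ref{thm:spectrum} every eigenvalue other than $\alpha$ has finite multiplicity, so the task reduces to bounding the number of \emph{distinct} eigenvalues above $\alpha$. If there were infinitely many, they would form an infinite bounded subset of $\mathbb{R}$ and Bolzano--Weierstrass would furnish a limit point $b\geq\alpha$ of $\sigma(T)$; uniqueness in clause (2) forces $b=\alpha$, but then a decreasing sequence in $\sigma(T)$ converges to $\alpha$ from above, contradicting the ``only the limit of an increasing sequence'' part of clause (2). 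An entirely analogous argument shows that the distinct eigenvalues in $\Lambda_-$, when infinitely many, can accumulate only at $\alpha$, so $\alpha-\alpha_\lambda\to 0$ with each eigenvalue of $K$ carrying finite multiplicity, yielding compactness of $K$.

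The subtle ingredient is therefore the one-sided nature of the limit point in clause (2): it is precisely this asymmetry that prevents infinitely many eigenvalues from sitting above $\alpha$, and is exactly what makes $F$ finite rank rather than merely compact. Everything else is a routine diagonal bookkeeping exercise on the eigenbasis.
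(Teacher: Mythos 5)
Your proof is correct, and it rests on exactly the same spectral facts (clauses (2)--(4) of Theorem \ref{thm:spectrum}) while being organized rather differently from the paper's. The paper splits into two cases according to whether $T$ has an eigenvalue of infinite multiplicity and, in each case, rebuilds an increasing sequence of eigenvalues by repeatedly extracting a minimum-attaining unit vector and passing to the orthogonal complement; $\alpha$ is then the limit of that sequence (or the infinite-multiplicity eigenvalue), $K$ and $F$ are defined via the projections onto the span of the extracted eigenvectors and its finite-dimensional complement, and positivity of $F$ requires a separate argument that $m(T|_{M_2})\geq\alpha$. You instead define $\alpha$ intrinsically --- the unique limit point of $\sigma(T)$ when one exists, the unique infinite-multiplicity eigenvalue otherwise --- and your observations that at least one of the two alternatives must occur (a bounded spectrum with no accumulation point is finite, forcing an infinite-multiplicity eigenvalue since $\dim H=\infty$) and that clause (4) makes the two rules consistent are exactly what is needed for this to be well defined. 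Trichotomizing the eigenbasis against $\alpha$ then makes $K,F\geq0$, $KF=FK=0$, and $\|K\|\leq\alpha$ immediate, and your finiteness argument for $\Lambda_+$ (infinitely many distinct eigenvalues above $\alpha$ would have to accumulate at $\alpha$ from above, contradicting the one-sided nature of the limit point in clause (2)) is precisely the content the paper extracts from Proposition \ref{prop:onelimitpoint}. The one step to write out carefully is the compactness of $K$: you need that for every $\epsilon>0$ only finitely many $\lambda\in\Lambda_-$ satisfy $\alpha-\alpha_\lambda\geq\epsilon$, which follows from the finiteness of the set of distinct eigenvalues at most $\alpha-\epsilon$ (otherwise a second limit point) together with the finite multiplicity of every eigenvalue different from $\alpha$; you name both ingredients, so this is a matter of exposition rather than a gap. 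Net effect: your route avoids the case split and the iterative construction at the cost of a slightly more delicate well-definedness check for $\alpha$, whereas the paper's construction exhibits the eigenvectors and the increasing sequence explicitly.
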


\begin{proof}
	We have $T\in \mathcal{AM^+}(H)$. Then by Theorem \ref{thm:spectrum}(3), there can exist at most one eigenvalue for $T$ with infinite multiplicity.
	We prove the theorem in the following two cases separately.\\
	
	Case(I): $T$ has no eigenvalue with infinite multiplicity.\\
	
	To prove this case, we follow the approach used in \cite {gan}.
	Let $H_{1}:=H$ and $T_{1}:=T$. Since $ T \in \mathcal{AM}(H)$ and $T \geq 0 $ we get $T_{1} \in \mathcal{M}(H_{1})$ and
	$ T_{1} \geq 0 $. Then by Proposition~\ref{prop:eigenvalue}, there exists a $u_1 \in S_{H_1} $ such that $T_1u_1=m(T_{1})u_1$. Let $ \alpha_{1}=m(T_{1})$. Then  $\alpha_{1} \geq 0$.
	
	Let $H_2:={\left[u_1 \right]}^{\perp}$. Note that $H_1\supseteq H_2$ and $H_2$ reduces $T$. Let $T_2:=T|_{H_2}$. Since $T\in \mathcal{AM}(H)$ and $T\geq 0$, we get $T_2 \in \mathcal{M}(H_2)$ and $T_2\geq 0$. Then by Proposition \ref{prop:eigenvalue}, there exists a $u_2 \in S_{H_2} $ such that $T_2u_2=m(T_{2})u_2$. Let $ \alpha_{2}=m(T_{2})$. Then  $\alpha_{2} \geq \alpha_{1} \geq 0$ and  $u_1 \perp u_2$.
	
	Let $H_3:={\left[ u_1, u_2\right]}^{\perp}$.  Note that  $ H_1\supseteq H_2\supseteq H_3$ and $H_3$ reduces $T$. Let $T_3:=T|_{H_3}$. Since $T\in \mathcal{AM}(H)$ and $T\geq 0$  we get $T_3 \in \mathcal{M}(H_3)$ and $T_3\geq 0$. Then by Proposition \ref{prop:eigenvalue}, there exists a $u_3 \in S_{H_3} $ such that  $T_3u_3=m(T_3) u_3$. Let $\alpha_{3}=m(T_3)$. Then $ \alpha_{3}\geq\alpha_{2}\geq \alpha_{1} \geq 0$ and $u_3\perp u_i, i=1, 2.$
	
	Proceeding this way after $n$ many steps we get a sequence of subspaces
	$\{H_{i}\}_{i=1}^{n}$ of $H$ such that $H_{1} \supseteq H_{2} \supseteq H_{3}\dots \supseteq H_{n}$ where $H_i={\left[u_1, u_2, u_3,\dots,u_{i-1}\right]}^{\perp}$ for all $1\leq i\leq n $  and also a sequence of scalars $\{\alpha_{i}\}_{i=1}^{n}$ such that $0 \leq \alpha_{1}\leq \alpha_{2}\leq \alpha_{3}\leq \dots \leq \alpha_{n}$, where $\alpha_i=m(T_i)$ for all $1\leq i\leq n$.
	
	Next, we claim that $H_{n} \neq \{0\}$ for all $n \in \mathbb{N}$. If not, then there exists a $n\in  \mathbb{N}$ such that $H_{n}=\{0\}$. By the projection theorem we have,
	\begin{equation*}
		\label{eqn_1}
		H=H_{n} \oplus {H_n}^\perp={H_n}^\perp= \left[u_1, u_2, u_3,\dots,u_{n-1}\right],
	\end{equation*}
	a contradiction to $H$ is infinite dimensional. Therefore $H_{n}\neq \{0\}$ for all $n \in \mathbb{N}$.  So there exists an infinite sequence of scalars $ \{\alpha_{n}\}_{n\in \mathbb{N}}$ such that $0\leq \alpha_n\leq \alpha_{n+1}\leq \|T\|$ for all $n\in \mathbb{N}$. 
	By the monotonic convergence theorem $\alpha_n \rightarrow \alpha$ for some  $\alpha \leq \left\|T\right\|$.
	
	Let $M_1=\left[u_n \colon n\in \mathbb{N}\right]$. Denote by $M_2=M_1^{\perp}$. We must have dim $M_2<\infty$, if not then by applying the same procedure as above, we can find an increasing sequence of eigenvalues of $T$ that converges to a scalar which is greater than $\alpha$, but this is  a contradiction to Theorem \ref{thm:spectrum}(2), that is $\sigma(T)$ can have at most one limit point.
	
	Denote by, $K:=\alpha P_{M_1}-TP_{M_1}$. Then we have, $Kx:=\displaystyle \sum_{n=1}^{\infty}(\alpha-\alpha_n)\langle x, u_n\rangle, \forall x\in H$. Now the converse of spectral theorem \cite[Theorem 6.2, page 181]{goh} gives that $K$ is a positive compact operator. Clearly, $\|K\|\leq \alpha$ and $\overline{R(K)}=M_1$.
	
	Denote by, $F:= TP_{M_2}-\alpha P_{M_2}$. Note that $F$ is a finite rank operator and $R(F)\subseteq M_2$. Next, $M_2$ is a reducing subspace for $T$ implies that $TP_{M_2}=P_{M_2}T$ \cite[Proposition 3.7, page 39]{con} and so $F$ is self-adjoint. Now, $m\left(T|_{M_2}\right)\geq \alpha_n, \forall n \in \mathbb{N}$ implies that $m\left(T|_{M_2}\right)\geq \alpha$. Therefore $\sigma_p (F)=\{\lambda -\alpha \colon \lambda \in \sigma_p \left(T|_{M_2}\right)\} \subseteq [0, \infty)$. Consequently, $F$ is positive.
	
	Clearly, $H=M_1\oplus M_2$. Then $T=TP_{M_1}+TP_{M_2}= (\alpha P_{M_1}-K)+(F+\alpha P_{M_2})=\alpha I-K+F$. Obviously, $KF=FK=0$.\\
	
	Case(II): $T$ has exactly one eigenvalue with infinite multiplicity $\alpha$ (say).\\
	
	Let $M_1:=N(T-\alpha I)$. By the Projection theorem, we have $H=M_1\oplus M_1^{\perp}$.
	
	Suppose dim $M_1^{\perp}<\infty$. Since $M_1$ is a reducing subspace for $T$, $T|_{M_1^{\perp}}$ is a positive finite rank operator. By the Spectral theorem, there exists an orthonormal eigenbasis for $M_1^{\perp}$. We have $\alpha  \notin \sigma_p\left(T|_{M^{\perp}}\right)$ since $T\geq 0$. Let $M_2$ be the subspace of $M_1^{\perp}$ which is spanned by all eigenvectors corresponding to the eigenvalues of $T|_{M_1^{\perp}}$ that are less than $\alpha$ (if no eigenvalues is smaller than $\alpha$ then we take   $M_2=\{0\}$). Similarly, define $M_3$ to be the subspace of $M_1^{\perp}$ that is spanned by all eigenvectors corresponding to the eigenvalues of $T|_{M_1^{\perp}}$ that are greater than $\alpha$ (if no eigenvalues is greater than $\alpha$ then we take  $M_3=\{0\}$). Clearly, $M_1^{\perp}=M_2\oplus M_3$ and $H=M_1 \oplus M_2 \oplus M_3$. Then we have,
	\begin{align*}
	T & =TP_{M_1}+TP_{M_2}+TP_{M_3}\\
	& =\alpha P_{M_1}+TP_{M_2}+TP_{M_3}\\
	& =\alpha I-(\alpha P_{M_2}-TP_{M_2})+(TP_{M_3}-\alpha P_{M_3}) \left[\because I=P_{M_1}+P_{M_2}+P_{M_3}\right].
	\end{align*}
	This implies that $T= \alpha I-K+F$ where $K=\alpha P_{M_2}-TP_{M_2}$ and $F=TP_{M_3}-\alpha P_{M_3}$ are both positive finite rank operators such that $KF=FK=0$ and $\|K\|\leq \alpha$.
	
	In case dim $M_1^{\perp}=\infty$, let $H_1:=M_1^{\perp}$. Note that $H_1$ reduces $T$. Denote by $T_1=T|_{H_1}$. Since $T\in \mathcal{AM}(H)$ and $T\geq 0$, we get $T_1 \in \mathcal{AM}(H_1)$ and $T_1\geq 0$. Moreover, $T_1$ has no eigenvalue with infinite multiplicity. Then by applying same procedure to $T_1$, like in Case(I), we can get an infinite orthonormal sequence of eigenvectors $\{u_n\}_{n=1}^{\infty}$ and a corresponding sequence of eigenvalues $\{\alpha_n\}_{n=1}^{\infty}$ of $T$ such that $ 0 \leq \alpha_{1}\leq \alpha_{2}\leq \alpha_{3}\leq \dots \leq \left\|T_1\right\|\leq \left\|T\right\|$ and $\alpha_n \rightarrow \beta$. From Theorem \ref{thm:spectrum}(4), we must have $\alpha=\beta$. Let us denote by, $M_2=\left[u_n \colon n\in \mathbb{N}\right]$. Let $M_3$ be the orthogonal compliment of $M_2$ in $H_1$. Then we must have dim $M_3<\infty$, otherwise $\sigma(T)$ will have two distinct limit points, which is not possible by Theorem \ref{thm:spectrum}(2).
	
	Denote by, $K:=\alpha P_{M_2}-TP_{M_2}$. Then we have, $Kx:=\displaystyle \sum_{n=1}^{\infty}(\alpha-\alpha_n)\langle x, u_n\rangle, \forall x\in H$. Now the converse of spectral theorem \cite[Theorem 6.2, page 181]{goh} gives that $K$ is a positive compact operator. Clearly, $\|K\|\leq \alpha$ and $\overline{R(K)}=M_2$.
	
	Denote by, $F:=TP_{M_3}-\alpha P_{M_3}$. Note that $F$ is a finite rank operator and $R(F)\subseteq M_3$. Next, $M_3$ is a reducing subspace for $T$ implies that $TP_{M_3}=P_{M_3}T$ \cite[Proposition 3.7, page 39]{con} and so $F$ is self-adjoint. Now, $m\left(T|_{M_3}\right)\geq \alpha_n, \forall n \in \mathbb{N}$ implies that $m\left(T|_{M_3}\right)\geq \alpha$. Therefore $\sigma_p (F)=\{\lambda -\alpha \colon \lambda \in \sigma_p \left(T|_{M_3}\right)\} \subseteq [0, \infty)$. Consequently, $F$ is positive.
	
	Finally, we have, $T=TP_{M_1}+TP_{M_2}+TP_{M_3}=\alpha I-(\alpha P_{M_2}-TP_{M_2})+(TP_{M_3}-\alpha P_{M_3})$. It follows that $T=\alpha I-K+F$.
\end{proof}

\begin{theorem}
	\label{thm:characterization}
	Let $T\in \mathcal{B}(H)$. Then the following are equivalent:
	\begin{enumerate}
		\item $T\in \mathcal{AM^+}(H)$;
		\item There exists a decomposition for $T$ of the form $T:= \alpha I-K+F$ where  $K$ is a positive compact operator with $\|K\|\leq \alpha$ and $F$ is a positive finite rank operator satisfying $KF=FK=0$. Moreover, this decomposition is unique.
	\end{enumerate}
\end{theorem}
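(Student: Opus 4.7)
The direction $(1) \Rightarrow (2)$ is precisely Theorem~\ref{thm:necessary}, so the existence half is already in hand; it only remains to establish $(2) \Rightarrow (1)$ and then uniqueness.

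For $(2) \Rightarrow (1)$, I would first verify positivity of $T = \alpha I - K + F$. Since $K, F \geq 0$ are self-adjoint and $KF = FK = 0$, one has $R(F) \subseteq N(K) = \overline{R(K)}^{\perp}$, so the closures $M_K := \overline{R(K)}$ and $M_F := \overline{R(F)}$ are orthogonal; writing $M_0 := (M_K \oplus M_F)^\perp = N(K) \cap N(F)$, the space $H$ decomposes as $M_K \oplus M_F \oplus M_0$, and each summand reduces both $K$ and $F$. On $M_K$ one has $T = \alpha I - K$ which is positive because $\sigma(K|_{M_K}) \subseteq [0,\|K\|] \subseteq [0,\alpha]$; on $M_F$ one has $T = \alpha I + F \geq 0$; on $M_0$ one has $T = \alpha I \geq 0$. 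Hence $T \geq 0$. The hypothesis $\alpha \geq \|K\|$ certainly satisfies $\alpha \geq \|K\|/2$, so Theorem~\ref{thm:sufficient} yields $T \in \mathcal{AM}(H)$, and combined with $T \geq 0$ we obtain $T \in \mathcal{AM^+}(H)$.

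For uniqueness, suppose $\alpha I - K + F = \alpha' I - K' + F'$ are two decompositions of $T$ satisfying the stated conditions. The plan is to pin down $\alpha$ intrinsically from the spectral structure of $T$. From the orthogonal decomposition above, $\sigma(T) = \{\alpha - \mu : \mu \in \sigma(K|_{M_K})\} \cup \{\alpha + \nu : \nu \in \sigma_p(F)\} \cup (\{\alpha\} \text{ if } M_0 \neq \{0\})$. Because $H$ is infinite dimensional while $M_F$ is finite dimensional, at least one of $M_K$ or $M_0$ is infinite dimensional; in the former case $K$ is compact of infinite rank and its eigenvalues accumulate at $0$, so $\alpha$ is the unique limit point of $\sigma(T)$, while in the latter case $\alpha$ is an eigenvalue of $T$ of infinite multiplicity. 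In either situation, Theorem~\ref{thm:spectrum} guarantees that this distinguished scalar is unique, so $\alpha = \alpha'$. Once $\alpha$ is determined, $T - \alpha I = F - K = F' - K'$ is a self-adjoint operator whose positive part equals $F$ (respectively $F'$) and whose negative part equals $K$ (respectively $K'$), as is visible on the decomposition $M_K \oplus M_F \oplus M_0$. Uniqueness of the Jordan decomposition of a self-adjoint operator (via continuous functional calculus) then forces $F = F'$ and $K = K'$.

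The main delicate point, and the step I would write most carefully, is the identification of $\alpha$ as a spectral invariant of $T$: one needs to handle both the ``infinite rank $K$'' case and the ``infinite dimensional $M_0$'' case, and to invoke Theorem~\ref{thm:spectrum}(2)--(4) to rule out any competing candidate for $\alpha$ coming from the other decomposition. Everything else (the orthogonality of $R(K)$ and $R(F)$, positivity on each summand, extraction of positive and negative parts) is routine once the decomposition $H = M_K \oplus M_F \oplus M_0$ is in place.
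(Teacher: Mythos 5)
Your plan follows the paper's proof essentially step for step: existence via Theorem~\ref{thm:necessary}, the converse via positivity of $\alpha I-K$ and $F$ together with Theorem~\ref{thm:sufficient} (noting $\alpha\geq\|K\|\geq\|K\|/2$), identification of $\alpha$ as the unique limit point or infinite-multiplicity eigenvalue of $\sigma(T)$ via Theorem~\ref{thm:spectrum}, and then recovery of $K$ and $F$ from $K-F=\hat K-\hat F$. The only cosmetic difference is the last step, where the paper obtains $K+F=\hat K+\hat F$ from $(K+F)^2=(K-F)^2$ and uniqueness of the positive square root instead of invoking the Jordan decomposition, but this is the same argument in different words.
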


\begin{proof}
	
	(1) $\Rightarrow$ (2):
	We have $T\in \mathcal{AM^+}(H)$. Then from Theorem \ref{thm:necessary}, $T$ is of the form, $T=\alpha I-K+F$ where $K$ is a positive compact operator with $\|K\|\leq \alpha$ and $F$ is a positive finite rank operator satisfying $KF=FK=0$.
	
	It remains to prove the uniqueness part:
	
	Let, if possible $T$ has another decomposition of the form $T:=\hat{\alpha}I-\hat{K}+\hat{F}$ where $\hat{K}$ is a positive compact operator with $\|\hat{K}\|\leq \alpha$ and $\hat{F}$ is a positive finite rank operator satisfying $\hat{K}\hat{F}=\hat{F}\hat{K}=0$.
	
	By the spectral mapping theorem, we have $\sigma(T)=\alpha-\sigma(K-F)$. Since $K-F$ is a self-adjoint compact operator and dim $H=\infty$, by applying the spectral theorem we get that $\alpha$ is either the limit point of $\sigma(T)$ or the eigenvalue of $T$ with infinite multiplicity. By the similar arguments, we get $\hat{\alpha}$ is also, either the limit point of $\sigma(T)$ or the eigenvalue of $T$ with infinite multiplicity. Now, Theorem \ref{thm:spectrum} implies that $\alpha=\hat{\alpha}$.  Next, $\alpha I-K+F=\hat{\alpha}I-\hat{K}+\hat{F}$ implies that,
	\begin{equation}
		\label{eqn:1}
		K-F=\hat{K}-\hat{F}.
	\end{equation}
	We also have, $(K+F)^2=(\hat{K}+\hat{F})^2$ because $KF=FK=\hat{K}\hat{F}=\hat{F}\hat{K}=0$, but every positive operator has a unique positive square root\cite[Theorem VI.9, page 196]{ree}, so we must have,
	\begin{equation}
		\label{eqn:2}
		K+F=\hat{K}+\hat{F}.
	\end{equation}
	Now, combining the Equations \eqref{eqn:1} and \eqref{eqn:2}, we get $K=\hat{K}$ and $F=\hat{F}$.\\

	(2) $\Rightarrow$ (1):
	We have $T\in \mathcal{B}(H)$  and $T$ is of the form $T:=\alpha I-K+F$ where $K$ is a positive compact operator such that $\|K\|\leq \alpha $ and $F$ is a finite rank operator. Then, $T\geq 0$ because $\alpha I-K \geq 0$ and $F\geq 0$. From Theorem \ref{thm:sufficient}, it follows that $T\in\mathcal{AM^+}(H)$.
\end{proof}

Let $T\in \mathcal{AM^+}(H)$. Then, according to \cite[Definition 3.9]{gan}, $T$ is of
\begin{enumerate}
	\item \emph{first type} if it has no eigenvalue of infinite multiplicity,
	\item \emph{second type} if it has a unique eigenvalue of infinite multiplicity.
\end{enumerate}
The next theorem completely characterizes the positive absolutely minimum attaining operators of both the types.

\begin{theorem}
	\label{thm:types}
	Let $T\in \mathcal{B}(H)$. Then the following are equivalent:
	\begin{enumerate}
	\item
	 $T\in \mathcal{AM^+}(H)$
	
	 \item  There exists a unique decomposition for $T$ of the form $T:= \alpha I-K+F$ where  $K$ is a positive compact operator with $\|K\|\leq \alpha$ and $F$ is a positive finite rank operator satisfying $KF=FK=0$. 
	 Moreover,
	\begin{enumerate}
		\item  $T$ is of \emph{first type} whenever $N(K-F)$ is finite dimensional,
		\item  $T$ is of \emph{second type} whenever $N(K-F)$ is infinite dimensional.
	\end{enumerate}
    \end{enumerate}
\end{theorem}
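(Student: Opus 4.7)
The plan is to reduce the main equivalence $(1)\Leftrightarrow(2)$ to Theorem \ref{thm:characterization}, which already gives existence and uniqueness of the decomposition $T=\alpha I-K+F$ with the stated properties. Therefore the only genuinely new content is the equivalence between the type classification of $T$ and the dimension of $N(K-F)$.

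The key identity to exploit is
\begin{equation*}
T-\alpha I = -(K-F),
\end{equation*}
so that $N(T-\alpha I)=N(K-F)$. In other words, $\alpha$ is an eigenvalue of $T$ whose eigenspace has dimension exactly $\dim N(K-F)$ (it may happen that this dimension is zero, in which case $\alpha$ is simply a limit point of the spectrum rather than an eigenvalue). This immediately tells us that $\alpha$ has infinite multiplicity as an eigenvalue of $T$ if and only if $N(K-F)$ is infinite dimensional.

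The remaining point is that $\alpha$ is the \emph{only} candidate for being an eigenvalue of $T$ with infinite multiplicity. For this, observe that $K-F$ is a compact self-adjoint operator (since $K$ is compact and $F$ is of finite rank, both self-adjoint). Any eigenvalue $\lambda\neq\alpha$ of $T$ satisfies $\lambda=\alpha-\mu$ for some nonzero eigenvalue $\mu$ of $K-F$; but the spectral theorem for compact self-adjoint operators forces every nonzero eigenvalue of $K-F$ to have finite multiplicity. Hence
\begin{equation*}
\dim N(T-\lambda I)=\dim N\bigl((K-F)-\mu I\bigr)<\infty
\end{equation*}
for every $\lambda\neq\alpha$ in $\sigma_p(T)$.

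Combining these two facts yields the desired dichotomy: $T$ has an eigenvalue of infinite multiplicity if and only if $\dim N(K-F)=\infty$, and in that case the unique such eigenvalue is $\alpha$ itself. Thus $T$ is of first type exactly when $N(K-F)$ is finite dimensional, and of second type exactly when $N(K-F)$ is infinite dimensional. No step should present a real obstacle; the mild care required is in handling the case where $N(K-F)=\{0\}$ (so $\alpha$ is a non-eigenvalue limit point) within the first-type case, which is harmless since first type only requires the \emph{absence} of infinite-multiplicity eigenvalues.
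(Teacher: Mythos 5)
Your proposal is correct and follows essentially the same route as the paper: both reduce the equivalence $(1)\Leftrightarrow(2)$ to Theorem \ref{thm:characterization} and then identify the type of $T$ via the relation between $T-\alpha I$ and $K-F$. The paper simply states ``clearly, $T$ has an eigenvalue of infinite multiplicity if and only if $N(K-F)$ is infinite dimensional'' after invoking the spectral mapping theorem, whereas you fill in that step explicitly by noting $N(T-\alpha I)=N(K-F)$ and that every nonzero eigenvalue of the compact self-adjoint operator $K-F$ has finite multiplicity.
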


\begin{proof}
First part of the proof follows directly from Theorem \ref{thm:characterization}. Next, by spectral mapping theorem, we have $\sigma(T)=\alpha-\sigma(K-F)$. Clearly, $T$ has an eigenvalue of infinite multiplicity if and only if $N(K-F)$ is infinite dimensional. Hence the theorem.
\end{proof}	
\begin{remark}
	If $F=0$, in Theorem \ref{thm:types} 2(a), then $T=\alpha I-K$. In this case,  $\alpha = \|T\|$. This is exactly the structure theorem obtained for \emph{first type} positive absolutely minimum attaining operators \cite [Theorem 4.6]{gan}. But this is not the case always, for instance, we have an example below.
\end{remark}

\begin{example}
	Let $D:{\ell}^2 \rightarrow {\ell}^2$ be defined by,
	$$ D(e_{n})=\begin{cases}
	0\ , & \text{if}\ n = 1 , \\
	\frac{1}{n}e_{n}\ , & \text{if}\ n\geq 2.
	\end{cases} $$
Let $P$ be the orthogonal projection onto $[e_1]$. Consider the operator, $T:=I-D+P$.
We have $T\in \mathcal{AM^+}({\ell}^2)$ by Theorem \ref{thm:sufficient}. Clearly, $\alpha =1, \|T\|=2$ and $\alpha < \|T\|$.	
\end{example}

The following example shows that the set $\mathcal{AM}(H)$ is not closed under addition.
\begin{example}
	Let $U\colon {\ell}^2 \rightarrow {\ell}^2$ be defined as,
	$$U(e_n)= \left(\sqrt{\frac{1}{n}}+ i \sqrt{1-\frac{1}{n}}\right)e_n, \forall \, n \in \mathbb{N}$$
	Then $U\in \mathcal{AM}({\ell}^2)$ because it is unitary. Obviously, the identity operator $I\in \mathcal{AM}({\ell}^2)$. But $I+U \notin \mathcal{AM}({\ell}^2)$. In fact, for each $x\in S_{{\ell}^2}$ we have,
	\begin{equation*}
		{\|(I+U)x\|}^2  =\displaystyle \sum_{n=1}^{\infty}\left [{\left(1+\sqrt{\frac{1}{n}}\right)}^2+ \left(1-\frac{1}{n}\right)\right]{|x_n|}^2
		= \displaystyle \sum_{n=1}^{\infty}\left(2+2\sqrt{\frac{1}{n}}\right){|x_n|}^2
		> 2.
	\end{equation*}
	On the other hand, we have $\inf\|(I+U)e_n\|=\sqrt{2}$. Therefore, $m(I+U)=\sqrt{2}$. But $\|(I+U)x\|> \sqrt{2}, \forall \, x\in S_{{\ell}^2}$. Hence $I+U \notin \mathcal{AM}({\ell}^2)$.
\end{example}
\begin{corollary}
	The class, ${\mathcal{AM}}^+(H)$ is closed under addition. In fact, ${\mathcal{AM}}^+(H)$ is a cone in the real Banach space of self-adjoint operators.
\end{corollary}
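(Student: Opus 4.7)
The plan is to exploit Theorem \ref{thm:characterization} to decompose each summand and then invoke Theorem \ref{thm:sufficient} (which, crucially, does not require the finite rank perturbation to be positive or to commute with the compact part) to conclude that the sum lies in $\mathcal{AM}^+(H)$. The cone property will follow almost trivially.

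First, I would take $T_1, T_2 \in \mathcal{AM}^+(H)$ and use Theorem \ref{thm:characterization} to write $T_i = \alpha_i I - K_i + F_i$ with $K_i \geq 0$ compact, $\|K_i\| \leq \alpha_i$, and $F_i \geq 0$ of finite rank. Adding gives
\begin{equation*}
T_1 + T_2 = (\alpha_1 + \alpha_2) I - (K_1 + K_2) + (F_1 + F_2).
\end{equation*}
Here $K_1+K_2$ is a positive compact operator with $\|K_1+K_2\| \leq \|K_1\|+\|K_2\| \leq \alpha_1+\alpha_2$, and $F_1+F_2$ is a finite rank operator. In particular, $\alpha_1+\alpha_2 \geq \|K_1+K_2\| \geq \tfrac{1}{2}\|K_1+K_2\|$, so Theorem \ref{thm:sufficient} applies and yields $T_1+T_2 \in \mathcal{AM}(H)$. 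Since $T_1 + T_2$ is manifestly positive (as a sum of two positive operators), we conclude $T_1 + T_2 \in \mathcal{AM}^+(H)$.

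For the cone structure, I would handle non-negative scalar multiplication directly from the decomposition: if $c > 0$ and $T = \alpha I - K + F$ is the canonical decomposition of $T \in \mathcal{AM}^+(H)$, then $cT = (c\alpha) I - (cK) + (cF)$ is again of the required form with $\|cK\| = c\|K\| \leq c\alpha$ and $(cK)(cF) = c^2 KF = 0$, so $cT \in \mathcal{AM}^+(H)$; and the case $c=0$ is immediate since the zero operator is trivially in $\mathcal{AM}^+(H)$. Combined with the additive closure, this shows $\mathcal{AM}^+(H)$ is a convex cone inside the real Banach space of self-adjoint operators.

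The only subtle point, and essentially the reason the argument works, is that the sum decomposition $(\alpha_1+\alpha_2) I - (K_1+K_2) + (F_1+F_2)$ need not satisfy the orthogonality relation $(K_1+K_2)(F_1+F_2)=0$ nor positivity of $F_1+F_2$ in any useful sense relative to the new compact part, so one cannot directly cite Theorem \ref{thm:characterization} to finish. This is precisely why one passes through the strictly more general sufficient condition of Theorem \ref{thm:sufficient}, which only demands that $K$ be positive compact, that $F$ be finite rank, and that $\alpha \geq \|K\|/2$. I anticipate no real obstacle beyond flagging this point clearly.
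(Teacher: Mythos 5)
Your proof is correct and follows essentially the same route as the paper: decompose each summand via Theorem \ref{thm:characterization}, add the decompositions, and invoke the sufficient condition of Theorem \ref{thm:sufficient} (your observation that the summed decomposition need not satisfy $KF=0$ or positivity of $F$, which is why one must pass through the sufficient condition rather than the characterization, is exactly the point). The only cosmetic difference is in the cone claim: you verify closure under nonnegative scalar multiplication, while the paper instead checks properness by noting that $T$ and $-T$ both in $\mathcal{AM}^+(H)$ forces $T=0$; both are routine.
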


\begin{proof}
	Let $T_1, T_2 \in {\mathcal{AM}}^+(H)$. Then $T_1+T_2$ is obviously positive. Next, by Theorem \ref{thm:characterization}, there exists positive scalars $\alpha_1, \alpha_2$, positive compact operators $K_1, K_2$ and positive finite rank operators $F_1, F_2$ such that $T_1=\alpha_1I -K_1+F_1$ and $T_2=\alpha_2I -K_2+F_2$ where $\|K_1\| \leq \alpha_1$ and $\|K_2\| \leq \alpha_2$ . Moreover, $K_1F_1=F_1K_1=0$ and $K_2F_2=F_2K_2=0$. Then $T_1+T_2=(\alpha_1 +\alpha_2)I-(K_1+K_2)+(F_1+F_2)$. Now, by Theorem \ref{thm:sufficient}, $T_1 + T_2 \in \mathcal {\mathcal{AM}}^+(H)$. Suppose $T$ and $-T$ both are in ${\mathcal{AM}}^+(H)$, then $T=0$. This shows that ${\mathcal{AM}}^+(H)$ is a proper cone in the real Banach space of self-adjoint operators.
\end{proof}

\begin{theorem}
	\label{thm:modulusAM}
	Let $T\in \mathcal{B}(H_1, H_2)$. Then the following statements are equivalent:
	\begin{enumerate}
		\item $T\in \mathcal{AM}(H_1, H_2)$\\
		\item $|T| \in \mathcal{AM^+}(H_1)$\\
		\item $T^*T \in \mathcal{AM^+}(H_1).$
	\end{enumerate}
\end{theorem}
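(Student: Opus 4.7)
The plan is to split into (1)$\Leftrightarrow$(2) and (2)$\Leftrightarrow$(3), handling the first by a direct norm identity and the second by invoking the structure theorem (Theorem \ref{thm:characterization}) in each direction. For (1)$\Leftrightarrow$(2), the identity $\|Tx\|_{H_2}^2 = \langle T^*Tx, x\rangle = \||T|x\|_{H_1}^2$ holds for every $x \in H_1$, so for any nonzero closed subspace $M \subseteq H_1$ one has $m(T|_M) = m(|T||_M)$ with identical minimizers; hence $T|_M \in \mathcal{M}(M, H_2)$ if and only if $|T||_M \in \mathcal{M}(M, H_1)$, and quantifying over $M$ yields the equivalence.

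For (2)$\Rightarrow$(3), I would apply Theorem \ref{thm:characterization} to write $|T| = \alpha I - K + F$ with $K \geq 0$ compact, $\|K\| \leq \alpha$, $F \geq 0$ finite rank, and $KF = FK = 0$. Squaring and using the orthogonality to kill cross terms gives $T^*T = \alpha^2 I - (2\alpha K - K^2) + (2\alpha F + F^2)$. One verifies that $K_1 := K(2\alpha I - K) \geq 0$ is compact with $\|K_1\| \leq \alpha^2$, via the elementary inequality $\lambda(2\alpha - \lambda) \leq \alpha^2$ for $\lambda \in [0, \alpha]$, that $F_1 := F(2\alpha I + F) \geq 0$ is finite rank, and that $K_1 F_1 = F_1 K_1 = 0$. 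A second application of Theorem \ref{thm:characterization} then yields $T^*T \in \mathcal{AM^+}(H_1)$.

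For (3)$\Rightarrow$(2), assume $T^*T = \beta I - K' + F'$ via Theorem \ref{thm:characterization}. The orthogonality $K'F' = F'K' = 0$ gives an orthogonal decomposition $H_1 = M_0 \oplus M_1 \oplus M_2$ with $M_1 = \overline{R(K')}$, $M_2 = R(F')$ (finite dimensional), and $M_0 = (M_1 \oplus M_2)^\perp$; these three subspaces reduce $T^*T$ and hence reduce $|T| = (T^*T)^{1/2}$, which lies in the $C^*$-algebra generated by $T^*T$. By functional calculus, $|T|$ equals $\sqrt{\beta}\, I$ on $M_0$, $(\beta I - K')^{1/2}$ on $M_1$, and $(\beta I + F')^{1/2}$ on $M_2$. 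Setting $\alpha = \sqrt{\beta}$, $K = \alpha I - (\beta I - K')^{1/2}$ on $M_1$ (extended by zero off $M_1$), and $F = (\beta I + F')^{1/2} - \alpha I$ on $M_2$ (extended by zero off $M_2$) produces a decomposition $|T| = \alpha I - K + F$ that I expect to meet the hypotheses of Theorem \ref{thm:characterization}. The main obstacle is verifying precisely this point: one must argue via the continuous functional calculus applied to the compact, self-adjoint $K'$ that the spectral values $\sqrt{\beta} - \sqrt{\beta - \mu}$ for $\mu \in \sigma(K')$ cluster only at $0$, so that $K$ is compact and positive, and that $\|K\| = \sqrt{\beta} - \sqrt{\beta - \|K'\|} \leq \alpha$; finite rank of $F$ and $KF = FK = 0$ follow from the disjointness of supports. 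A final appeal to Theorem \ref{thm:characterization} then gives $|T| \in \mathcal{AM^+}(H_1)$.
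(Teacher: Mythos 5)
Your proposal is correct and follows essentially the same route as the paper: (1)$\Leftrightarrow$(2) via the identity $\|Tx\|^2=\||T|x\|^2$, and both directions of (2)$\Leftrightarrow$(3) by feeding the canonical decomposition $\alpha I-K+F$ from Theorem \ref{thm:characterization} through squaring (getting $\beta I-(2\alpha K-K^2)+(2\alpha F+F^2)$ with the same norm estimate $\sup\{2\alpha\lambda-\lambda^2:\lambda\in\sigma(K)\}\leq\alpha^2$) and through square roots, respectively. The only cosmetic difference is in (3)$\Rightarrow$(2), where you organize the square-root computation via continuous functional calculus on the three reducing subspaces, while the paper writes explicit eigenvector expansions for $K'$ and $F'$, builds the candidate square root $S$ by hand, and invokes uniqueness of the positive square root to conclude $S=|T|$ --- the resulting operators $K$ and $F$ are the same.
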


\begin{proof}
	
	$(1)\Leftrightarrow (2)$:
	We have,
\begin{equation*}
{\left\|Tx\right\|}_{H_2}^2={\langle Tx, Tx \rangle}_{H_2} = {\langle |T|x, |T|x \rangle}_{H_1}={\left\||T|x\right\|}_{H_1}^2, \, \forall x\in H_1.
 \end{equation*}
 Clearly, $T\geq 0$. It follows that $T\in \mathcal{AM}(H_1, H_2)$ iff $ |T| \in \mathcal{AM^+}(H_1) $.\\
	
	$(2)\Rightarrow (3)$: Let $|T| \in \mathcal{AM^+}(H_1)$. Then by Theorem \ref{thm:characterization}, there exists a decomposition for $|T|$ of the form $|T|:= \alpha I-K+F$ where  $K$ is a positive compact operator with $\|K\|\leq \alpha$ and $F$ is a positive finite rank operator satisfying $KF=FK=0$. This implies, $T^*T=|T|^2=\beta I-\widetilde{K}+\widetilde{F}$, where $\beta= {\alpha}^2$, $\widetilde{K}= 2\alpha K-K^2$  is a compact operator which is positive because $\alpha \geq \|K\|$  and $\widetilde{F}= 2\alpha F +  F^2$ is a positive finite rank operator.
	Next,  by the Spectral radius formula for normal operators \cite[Theorem 1]{ber} $, \|\widetilde{K}\|= \text{sup} \{2\alpha \lambda -{\lambda}^2 \colon \lambda \in \sigma(K)\}\leq {\alpha}^2=\beta$. Since $R(\widetilde{K})\subseteq R(K)$ and $R(\widetilde{F})\subseteq R(F)$, we have $\widetilde{K}\widetilde{F}=\widetilde{F}\widetilde{K}=0$. Then by applying Theorem \ref{thm:characterization} once again, we conclude that $T^*T\in \mathcal{AM^+}(H)$.\\
	
	$(3)\Rightarrow (2)$: Let $T^*T \in \mathcal{AM^+}(H_1)$. Then by Theorem \ref{thm:characterization}, there exists a decomposition for $T^*T$ of the form $T^*T:= \alpha I-K+F$ where  $K$ is a positive compact operator with $\|K\|\leq \alpha$ and $F$ is a positive finite rank operator satisfying $KF=FK=0$.
	
	By the spectral theorem, there exists an finite or infinite orthonormal sequence of eigenvectors $\{u_n\}_{n\geq 1}$ corresponding to the eigenvalues $\{\alpha_n\}_{n\geq 1}$ of $K$ such that,
	\begin{equation*}
		Kx= \displaystyle \sum_{n\geq 1}\alpha_n \langle x, u_n\rangle u_n, \forall x \in H.
	\end{equation*}
	Moreover, $ \alpha_n\geq \alpha_{n+1}\geq 0, \forall n\geq 1$ and in case if, $\{u_n\}_{n\geq 1}$ is an infinite sequence then $\alpha_n\rightarrow0$. Let us denote by $M_1=\left[u_n \colon n\geq 1\right]$.
	Similarly, by applying the spectral theorem to $F$, we get a finite orthonormal sequence of eigenvectors $\{v_n\}_{n=1}^{k}$ corresponding to the eigenvalues $\{\beta_n\}_{n= 1}^{k}$ of $F$ such that,
	\begin{equation*}
		Fx= \displaystyle \sum_{n=1}^{k}\beta_n \langle x, v_n\rangle v_n, \forall x \in H.
	\end{equation*}
	Moreover, $\beta_k\geq \beta_{k-1}\geq \dots \beta_2\geq \beta_1\geq 0$. Let us denote by $M_2=\left[v_n \colon 1\leq n\leq k\right]$. We have, $M_1\perp M_2$.
	Let $M_3$ be the orthogonal compliment of $M_1\oplus M_2$ in $H$. Clearly, $H=M_1\oplus M_2 \oplus M_3 $. Let $\{w_{\lambda}\}_{\lambda \in \Lambda}$ be an orthonormal basis for $M_3$. Using the decomposition for $T^*T$, we have,
	\begin{equation*}
		T^*Tx=\alpha \displaystyle \sum_{\lambda \in \Lambda}\langle x, w_{\lambda}\rangle w_{\lambda} +\displaystyle \sum_{n\geq 1}(\alpha-\alpha_n) \langle x, u_n\rangle u_n +\displaystyle\sum_{n=1}^{k}(\alpha+\beta_n) \langle x, v_n\rangle v_n , \forall x \in H.
	\end{equation*}
	Let us consider the operator $S:H\rightarrow H$ defined as,
	\begin{equation*}
		Sx={\alpha}^{\frac{1}{2}} \displaystyle \sum_{\lambda \in \Lambda}\langle x, w_{\lambda}\rangle w_{\lambda} +\displaystyle \sum_{n\geq 1}{(\alpha-\alpha_n)}^{\frac{1}{2}} \langle x, u_n\rangle u_n +\displaystyle\sum_{n=1}^{k}{(\alpha+\beta_n)}^{\frac{1}{2}} \langle x, v_n\rangle v_n , \forall x \in H.
	\end{equation*}
	Then $S\geq 0$ because we have $\alpha_n\leq \alpha, \forall n\geq 1$. By the definition, $S$ is positive square root of $T^*T={|T|}^2$, but positive square root is unique. Therefore we must have $S=|T|$. Let us define $K_1:H\rightarrow H$ by,
	\begin{equation*}
		K_1x:= \displaystyle \sum_{n\geq 1}\left({(\alpha-\alpha_n)}^{\frac{1}{2}}-{\alpha}^{\frac{1}{2}}\right)\langle x, u_n\rangle u_n, \forall x \in H.
	\end{equation*}
	If the set,$\{u_n\}_{n\geq 1}$ is finite then $K_1$ is a positive finite rank operator.
	In case if the set,$\{u_n\}_{n\geq 1}$ is infinite, we have the sequence  $\{{(\alpha-\alpha_n)}^{\frac{1}{2}}-{\alpha}^{\frac{1}{2}}\}_{n\geq 1}$ is monotonically decreasing to $0$. Now, the converse of Spectral theorem \cite[Theorem 6.2, page 181]{goh} gives that $K_1$ is a positive compact operator. Clearly, $\|K_1\|\leq {\alpha}^{\frac{1}{2}}$ and $R(K_1)=M_1$. Let us define $F_1:H\rightarrow H$ by,
	\begin{equation*}
		Fx=\displaystyle \sum_{n=1}^{k}\left({(\alpha+\beta_n)}^{\frac{1}{2}}-{\alpha}^{\frac{1}{2}}\right) \langle x, v_n\rangle v_n, \forall x \in H.
	\end{equation*}
	Clearly, $F_1$ is a positive finite rank operator and $R(F_1)=M_2$. Moreover, we have $K_1F_1=F_1K_1=0$. Now, it is easy to observe that $|T|={\alpha}^{\frac{1}{2}}I-K_1+F_1$. Then by Theorem \ref{thm:characterization}, we have $|T|\in \mathcal{AM^+}(H)$.
\end{proof}
\begin{corollary}
	Let $T\in \mathcal{B}(H)$ be positive. Then $T\in \mathcal{AM^+}(H)$ if and only if $T^{\frac{1}{2}} \in \mathcal{AM^+}(H)$.
\end{corollary}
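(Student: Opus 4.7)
The plan is to apply Theorem \ref{thm:modulusAM} to the operator $S := T^{\frac{1}{2}}$. Since $T \geq 0$, its positive square root $S \in \mathcal{B}(H)$ is well-defined, self-adjoint, and positive; consequently $|S| = S$ and $S^*S = S^2 = T$. These identities are exactly what I need to turn a statement about $|T^{1/2}|$ and $(T^{1/2})^*T^{1/2}$ into a statement about $T^{1/2}$ and $T$ themselves.

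First I will observe that for any positive operator $S$, the conditions $S \in \mathcal{AM}(H)$ and $S \in \mathcal{AM}^+(H)$ are equivalent, because the positivity clause in the definition of $\mathcal{AM}^+(H)$ is automatic. Applied to $S = T^{\frac{1}{2}}$ this gives $T^{\frac{1}{2}} \in \mathcal{AM}^+(H) \Longleftrightarrow T^{\frac{1}{2}} \in \mathcal{AM}(H)$.

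Next I will invoke the equivalence $(1) \Leftrightarrow (3)$ of Theorem \ref{thm:modulusAM} with $S = T^{\frac{1}{2}}$ in the role of the generic operator: this delivers $T^{\frac{1}{2}} \in \mathcal{AM}(H) \Longleftrightarrow (T^{\frac{1}{2}})^{*} T^{\frac{1}{2}} \in \mathcal{AM}^+(H)$, and the right-hand side is just $T \in \mathcal{AM}^+(H)$ by the identity $S^*S = T$. Chaining these two equivalences proves the corollary.

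I do not anticipate any serious obstacle here, since the corollary is essentially the specialization of Theorem \ref{thm:modulusAM} to the case where the operator already coincides with its own modulus; the only thing to verify is the harmless bookkeeping that the positivity hypothesis is automatically preserved under taking square roots in either direction.
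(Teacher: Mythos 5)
Your proposal is correct and follows essentially the same route as the paper: both arguments reduce the corollary to the equivalence $(1)\Leftrightarrow(3)$ of Theorem \ref{thm:modulusAM} applied to the positive operator $T^{\frac{1}{2}}$ (the paper states this as ``$T\in \mathcal{AM^+}(H)$ iff $T^2\in \mathcal{AM^+}(H)$'' and then substitutes $T^{\frac{1}{2}}$ for $T$, which is exactly your computation $(T^{\frac{1}{2}})^{*}T^{\frac{1}{2}}=T$). Your explicit remark that $\mathcal{AM}(H)$ and $\mathcal{AM^+}(H)$ coincide for positive operators is a harmless piece of bookkeeping the paper leaves implicit.
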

\begin{proof}
	Since $T\geq 0$, Theorem \ref{thm:modulusAM} implies that $T\in \mathcal{AM^+}(H)$ if and only if $T^2 \in \mathcal{AM^+}(H)$. Consequently, $T\in \mathcal{AM^+}(H)$ if and only if $T^{\frac{1}{2}} \in \mathcal{AM^+}(H)$.
\end{proof}	
	
\begin{corollary}
	\label{cor:normalAM}
	Let $T\in \mathcal{B}(H)$ be normal. Then $T\in \mathcal{AM}(H)$ if and only if $T^*\in \mathcal{AM}(H)$.
\end{corollary}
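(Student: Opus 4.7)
The plan is to reduce the statement to its positive counterpart via Theorem \ref{thm:modulusAM} and then exploit the normality of $T$ to identify $|T|$ with $|T^*|$.

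First I would apply Theorem \ref{thm:modulusAM} to $T$, obtaining that $T \in \mathcal{AM}(H)$ is equivalent to $|T| \in \mathcal{AM^+}(H)$. Applying the same theorem to $T^*$ (viewed as an element of $\mathcal{B}(H)$) gives that $T^* \in \mathcal{AM}(H)$ is equivalent to $|T^*| \in \mathcal{AM^+}(H)$.

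Next, I would use the hypothesis of normality: since $TT^* = T^*T$, both operators have the same unique positive square root, so $|T^*| = (TT^*)^{1/2} = (T^*T)^{1/2} = |T|$. Consequently, the condition $|T| \in \mathcal{AM^+}(H)$ is literally identical to the condition $|T^*| \in \mathcal{AM^+}(H)$, and chaining the two equivalences above yields the desired biconditional.

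There is no real obstacle here; the content of the corollary sits entirely inside Theorem \ref{thm:modulusAM}, and normality is used only through the elementary identity $|T| = |T^*|$. In fact this corollary is the $\mathcal{AM}$-analogue of Corollary \ref{cor:normal}, and its proof is structurally the same, with Proposition \ref{prop1} replaced by Theorem \ref{thm:modulusAM}.
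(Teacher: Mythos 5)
Your proposal is correct and matches the paper's argument exactly: the paper also notes that normality gives $|T^*|=|T|$ and then invokes Theorem \ref{thm:modulusAM} to transfer the $\mathcal{AM}$ property between $T$ and $T^*$ through their common modulus. Nothing is missing.
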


\begin{proof}
	We have $T$ is normal and so $|T^*|=|T|$. Now the proof is immediate from Theorem \ref{thm:modulusAM}.
\end{proof}
\begin{remark}
	Note that Corollary \ref{cor:normalAM} is not valid in general for all absolutely minimum attaining operators. We have the following example.
\end{remark}

\begin{example}
	Let $V:{\ell}^2 \rightarrow {\ell}^2$ be defined by,
	$$ V(e_{n})= e_{2n}, \forall n\in \mathbb{N}.$$
	Then, we have $V\in \mathcal{AM}({\ell}^2)$ because it is an isometry. Now, the adjoint
	$V^*:{\ell}^2 \rightarrow {\ell}^2$ is be  given by,
	$$V^*(e_{n})= \begin{cases}
	0\ , & \text{if}\ n \, \text{is odd} \\
	e_{\frac{n}{2}}\ , & \text{if}\ n \, \text{is even}.
	\end{cases}$$ 	
	Then $V^*$ is a partial isometry such that both $N(V^*)$ and $R(V^*)$ are infinite dimensional, hence by Proposition \ref{prop:partialisometry}(which is proved later), $V^*\notin\mathcal{AM}({\ell}^2)$. Notice that $V$ is not normal.
\end{example}

\begin{proposition}
	Let $T\in \mathcal{AM}(H)$. Then either $N(T)$ or $R(T)$ is finite dimensional.
\end{proposition}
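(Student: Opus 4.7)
The plan is to exploit the decomposition provided by Theorem \ref{thm:characterization}, applied to $|T|$ rather than $T$ itself, and split into two cases according to whether the scalar in that decomposition vanishes or not.

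First, since $T \in \mathcal{AM}(H)$, Theorem \ref{thm:modulusAM} gives $|T|\in \mathcal{AM^+}(H)$. Then Theorem \ref{thm:characterization} yields a (unique) representation
\begin{equation*}
|T|=\alpha I-K+F,
\end{equation*}
where $\alpha \geq 0$, $K$ is a positive compact operator with $\|K\|\leq \alpha$, and $F$ is a positive finite rank operator with $KF=FK=0$.

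Next, I would consider two cases. Case one: $\alpha=0$. Then the bound $\|K\|\leq \alpha=0$ forces $K=0$, so $|T|=F$ has finite rank. Writing the polar decomposition $T=V|T|$, where $V$ is a partial isometry, we have $R(T)=V(R(|T|))=V(R(F))$, which is the image of a finite dimensional subspace under a bounded operator and hence finite dimensional. Case two: $\alpha>0$. Here I would show that $N(T)$ is finite dimensional. Since $N(T)=N(|T|)$ (a standard polar decomposition identity), it suffices to analyze $N(\alpha I-(K-F))$. The operator $K-F$ is the sum of a compact and a finite rank operator, hence compact. Because $\alpha>0$, either $\alpha$ lies in the resolvent set of $K-F$, in which case $N(T)=\{0\}$, or $\alpha$ is a nonzero eigenvalue of $K-F$, in which case the corresponding eigenspace is finite dimensional by the spectral theorem for compact self-adjoint operators.

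In both cases we conclude that at least one of $N(T)$, $R(T)$ is finite dimensional, which is what is claimed. I do not anticipate a serious obstacle: the only point requiring a small care is identifying $\dim R(T)$ with $\dim R(|T|)$ via the polar decomposition in the $\alpha=0$ case, and the use of the fact that nonzero spectral values of a compact operator have finite dimensional eigenspaces in the $\alpha>0$ case. Both are standard facts already invoked elsewhere in the paper, so the proof is essentially a direct corollary of Theorems \ref{thm:characterization} and \ref{thm:modulusAM}.
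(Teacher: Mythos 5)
Your proof is correct and follows essentially the same route as the paper: both apply Theorem \ref{thm:modulusAM} followed by the decomposition of Theorem \ref{thm:characterization} (you to $|T|$, the paper to $T^*T$, an immaterial difference) and split into the cases $\alpha=0$ and $\alpha>0$, concluding $\dim R(T)<\infty$ in the first and $\dim N(T)<\infty$ in the second. The only cosmetic divergence is that in the second case you invoke the finite multiplicity of the nonzero eigenvalue $\alpha$ of the compact operator $K-F$ directly, whereas the paper argues that an infinite-dimensional $N(T^*T)$ would force $\alpha=0$; these are the same underlying fact.
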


\begin{proof}
		From Theorem \ref{thm:modulusAM}, $T\in \mathcal{AM}(H)$ implies that $T^*T \in \mathcal{AM^+}(H)$. Then by Theorem \ref{thm:characterization}, $T^*T$ has a decomposition of the form, $$T^*T=\alpha I-K+F$$ where $K$ is a positive compact operator with $\|K\|\leq \alpha$ and $F$ is a positive finite rank operator such that $KF=FK=0$.
	
		Next, $\sigma(T^*T)=\alpha -\sigma(K-F)$ implies that if there exists an eigenvalue with infinite multiplicity for $T^*T$ then it must be `$\alpha$'. We have the following two cases,
			
		Case(I): $\alpha=0$
		
		 Since $\|K\|\leq \alpha$, we have $K=0$ and $T^*T=F$ is a finite rank operator. Now, $\overline{R(T^*T)}=\overline{R(T^*)}$ implies that dim $R(T^*)<\infty$. From the singular value decomposition theorem \cite[Theorem 4.1, page 248]{goh}, it is easy to observe that an operator is of finite rank if and only if its adjoint is of finite rank. So we must have dim $R(T)<\infty$.
		
		Case(II): $\alpha>0$
		 	
		 In this case, we have dim $N(T)$= dim $N(T^*T) <\infty$. Otherwise, `$0$' has to be the eigenvalue for $T^*T$ with infinite multiplicity and $\alpha=0$, which is not true.		
\end{proof}

\begin{proposition}
		\label{prop:projection}
		Let $P\in \mathcal{B}(H)$ be an orthogonal projection. Then $P\in \mathcal{AM^+}(H)$ iff either $N(P)$ or $R(P)$ is finite dimensional.
\end{proposition}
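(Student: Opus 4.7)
The plan is to derive this as a quick corollary of the characterization in Theorem \ref{thm:characterization}, together with the preceding proposition, after observing that an orthogonal projection $P$ is automatically positive (since $\langle Px, x\rangle = \|Px\|^2 \geq 0$), so $P\in \mathcal{AM}(H)$ is equivalent to $P\in \mathcal{AM}^+(H)$.

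For the forward direction, I would simply invoke the previous proposition: if $P\in \mathcal{AM}^+(H)\subseteq \mathcal{AM}(H)$, then either $N(P)$ or $R(P)$ is finite dimensional.

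For the reverse direction I would split into two cases according to which subspace is finite dimensional, producing in each case the decomposition $P=\alpha I-K+F$ demanded by Theorem \ref{thm:characterization}. If $R(P)$ is finite dimensional, then $P$ itself is a positive finite rank operator, and the decomposition is trivial: take $\alpha=0$, $K=0$, $F=P$; all the hypotheses of Theorem \ref{thm:characterization}(2) (positivity of $K$ and $F$, $\|K\|\leq \alpha$, and $KF=FK=0$) hold vacuously, so $P\in \mathcal{AM}^+(H)$. If instead $N(P)$ is finite dimensional, then $I-P$ is the orthogonal projection onto $N(P)$ and hence is a positive finite rank operator (in particular compact) with $\|I-P\|\leq 1$. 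Taking $\alpha=1$, $K=I-P$, $F=0$ gives $P=I-(I-P)=\alpha I-K+F$, with $K\geq 0$ compact, $\|K\|\leq \alpha$, $F\geq 0$ of finite rank, and $KF=FK=0$ trivially; Theorem \ref{thm:characterization} then yields $P\in \mathcal{AM}^+(H)$.

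There is essentially no obstacle here; the only thing to check is that the candidate decompositions satisfy all the clauses of Theorem \ref{thm:characterization}, which is immediate in both cases.
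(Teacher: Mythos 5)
Your proposal is correct and follows essentially the same route as the paper: the forward implication via the preceding proposition on $N(T)$ or $R(T)$ being finite dimensional, and the reverse implication by exhibiting the decomposition $P=I-P_{N(P)}$ (your $K=I-P$) when $N(P)$ is finite dimensional, with the finite-rank case handled trivially. You merely spell out the $\alpha=0$, $K=0$, $F=P$ decomposition explicitly where the paper says ``obviously''; the content is identical.
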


\begin{proof}
		We have $P\geq 0$. Suppose $R(P)$ is finite dimensional. Then obviously, $P\in \mathcal{AM^+}(H)$. In case $N(P)$ is finite dimensional, then $P=I-P_{N(P)}$ and $P_{N(P)}\geq 0$. Therefore by Theorem \ref{thm:characterization}, $P\in \mathcal{AM^+}(H)$.
\end{proof}

\begin{proposition}
		\label{prop:partialisometry}
		Let $V\in \mathcal{B}(H)$ be partial isometry. Then $V\in \mathcal{AM}(H)$ iff either $N(V)$ or $R(V)$ is finite dimensional.
\end{proposition}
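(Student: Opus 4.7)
The plan is to reduce the statement about $V$ to the already-proven statement about orthogonal projections (Proposition~\ref{prop:projection}) by passing through the modulus $|V|$ and invoking Theorem~\ref{thm:modulusAM}.

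First I would record the standard fact that, for a partial isometry $V$, the operator $V^{*}V$ is the orthogonal projection onto the initial space $N(V)^{\perp}$. Since $V^{*}V$ is already an orthogonal projection, it is positive and idempotent, so it equals its own positive square root; hence $|V|=(V^{*}V)^{1/2}=V^{*}V=P_{N(V)^{\perp}}$. By Theorem~\ref{thm:modulusAM}, $V\in\mathcal{AM}(H)$ if and only if $|V|\in\mathcal{AM}^{+}(H)$, so the problem reduces to deciding when the projection $P_{N(V)^{\perp}}$ is absolutely minimum attaining.

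Next I would apply Proposition~\ref{prop:projection} to $P:=P_{N(V)^{\perp}}$: this projection lies in $\mathcal{AM}^{+}(H)$ exactly when one of $N(P)$ or $R(P)$ is finite dimensional. But $N(P)=N(V)$ and $R(P)=N(V)^{\perp}$, so the condition is that either $N(V)$ or $N(V)^{\perp}$ is finite dimensional. To match the statement of the proposition, I would finally use the defining property of a partial isometry, namely that $V$ restricts to an isometric isomorphism from $N(V)^{\perp}$ onto $R(V)$; this gives $\dim N(V)^{\perp}=\dim R(V)$, so ``$N(V)^{\perp}$ finite dimensional'' is the same as ``$R(V)$ finite dimensional''. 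Chaining the equivalences then yields the claim in both directions.

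There is really no major obstacle here: the whole argument is a short chain of equivalences that rests on already-established results. The only point that one must state carefully is the identification $|V|=P_{N(V)^{\perp}}$ for a partial isometry, together with the dimension equality between $N(V)^{\perp}$ and $R(V)$; once these are in place, the proposition follows immediately from Theorem~\ref{thm:modulusAM} and Proposition~\ref{prop:projection}.
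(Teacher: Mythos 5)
Your proposal is correct and follows essentially the same route as the paper: reduce to $|V|$ via Theorem~\ref{thm:modulusAM} and then apply Proposition~\ref{prop:projection}. You are in fact slightly more careful than the paper, which writes $|V|=P_{R(V)}$ where it should be the initial projection $P_{N(V)^{\perp}}=P_{\overline{R(V^*)}}$; your explicit identification $|V|=P_{N(V)^{\perp}}$ together with $\dim N(V)^{\perp}=\dim R(V)$ closes that small gap.
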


\begin{proof}
		The proof is immediate from Theorem \ref{thm:modulusAM} and Proposition \ref{prop:projection}, if we observe that $|V|=P_{R(V)}$.
\end{proof}

\begin{proposition}
		Let $V\in \mathcal{AM}(H)$ be a partial isometry and $F$ is a finite rank operator. Then $\forall \,\alpha \geq 0$, we have $\alpha V-F \in \mathcal{AM}(H)$.
\end{proposition}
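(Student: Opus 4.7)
Set $T := \alpha V - F$. The strategy is to pass through the modulus: by Theorem \ref{thm:modulusAM}, $T\in\mathcal{AM}(H)$ iff $T^*T\in\mathcal{AM}^+(H)$. Since $T^*T\geq 0$ is automatic, it suffices to produce $T^*T$ in the form $\beta I - G$ with $\beta\geq 0$ and $G$ self-adjoint finite rank, and then invoke Theorem \ref{thm:finiterank}. Since $V\in\mathcal{AM}(H)$ is a partial isometry, Proposition \ref{prop:partialisometry} says that either $R(V)$ or $N(V)$ is finite dimensional, and I will handle the two cases separately.

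If $R(V)$ is finite dimensional, then $V$ is finite rank, so $T=\alpha V-F$ and hence $T^*T$ are finite rank (and self-adjoint). Applying Theorem \ref{thm:finiterank} with scalar $0$ and finite-rank operator $-T^*T$ yields $T^*T \in \mathcal{AM}(H)$, and combined with positivity this gives $T^*T\in\mathcal{AM}^+(H)$.

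If $N(V)$ is finite dimensional, I use the defining identity of a partial isometry: $V^*V = P_{N(V)^{\perp}} = I - P_{N(V)}$ with $P_{N(V)}$ a finite rank orthogonal projection. Expanding,
\[
T^*T = \alpha^2 V^*V - \alpha(V^*F + F^*V) + F^*F = \alpha^2 I - G,
\]
where
\[
G := \alpha^2 P_{N(V)} + \alpha(V^*F + F^*V) - F^*F.
\]
Each summand in $G$ is a finite rank self-adjoint operator (the first because $\dim N(V)<\infty$, the rest because $F$ is finite rank), so $G$ itself is finite rank self-adjoint. Theorem \ref{thm:finiterank} applied to the nonnegative scalar $\alpha^2$ then gives $T^*T = \alpha^2 I - G \in \mathcal{AM}(H)$, hence $T^*T \in \mathcal{AM}^+(H)$.

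In either case, Theorem \ref{thm:modulusAM} concludes that $T = \alpha V - F \in \mathcal{AM}(H)$. I do not expect any genuine obstacle; the proof is a clean reduction to Theorem \ref{thm:finiterank} via the partial isometry identity $V^*V = I - P_{N(V)}$, and the only bookkeeping is checking that every operator appearing in $G$ truly has finite rank under the hypothesis of the appropriate case.
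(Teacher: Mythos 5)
Your proof is correct and follows essentially the same route as the paper: split into the cases $\dim R(V)<\infty$ and $\dim N(V)<\infty$, use $V^*V=I-P_{N(V)}$ to write $T^*T=\alpha^2 I-\widetilde{F}$ with $\widetilde{F}$ finite rank, and conclude via Theorem \ref{thm:finiterank} and Theorem \ref{thm:modulusAM}. (Your expansion of $T^*T$ even gets the sign of the $F^*F$ term right, where the paper's displayed formula has a harmless typo.)
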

	
\begin{proof}
		Let us denote by $T:=\alpha V-F$. Since $V\in \mathcal{AM}(H)$, we have either $N(V)$ or $R(V)$ is finite dimensional. Firstly, if $R(V)$ is finite dimensional, we are already done because $T$ is a finite rank operator. In the other case, let $N(V)$ be finite dimensional. Then we have $T^*T={\alpha}^2 P_{R(V^*)} - \left[\alpha V^*F+\alpha F^*V+F^*F\right]={\alpha}^2I-\widetilde{F}$ where $\widetilde{F}={\alpha}^2 P_{N(V)}+\alpha V^*F+\alpha F^*V+F^*F$ is a finite rank operator. Now, the proof follows directly from Theorem \ref{thm:finiterank} and Theorem \ref{thm:modulusAM}.
\end{proof}
	
\begin{remark}
		The above Proposition is valid, in particular if we allow $V$ to be an isometry, projection or a co-isometry.
\end{remark}

Using the polar decomposition theorem, Theorem \ref{thm:characterization} can be extended to a more general case, as below.

\begin{theorem}
		Let $T\in \mathcal{B}(H_1, H_2)$. Then the following are equivalent:
		\begin{enumerate}
			\item $T\in \mathcal{AM}(H_1, H_2)$;
			\item There exists a decomposition for $T$ of the form $T:= V\left(\alpha I-K+F\right)$ where  $K\in \mathcal{B}(H_1)$ is a positive compact operator with $\|K\|\leq \alpha$, $F\in \mathcal{B}(H_1)$ is a positive finite rank operator satisfying $KF=FK=0$ and $V\in \mathcal{B}(H_1, H_2)$ is a partial isometry such that $N(V)=N(\alpha I-K+F)$. Moreover, this decomposition is  unique.
		\end{enumerate}
\end{theorem}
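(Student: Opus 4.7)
The plan is to reduce everything to Theorem \ref{thm:modulusAM} (which ties $T \in \mathcal{AM}(H_1,H_2)$ to $|T| \in \mathcal{AM}^+(H_1)$), Theorem \ref{thm:characterization} (which gives the $\alpha I - K + F$ decomposition of any positive $\mathcal{AM}$ operator), and the polar decomposition theorem stated at the beginning of the paper (which provides both existence and uniqueness of the partial isometry factor).

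For the forward implication, I would start with $T \in \mathcal{AM}(H_1, H_2)$ and apply Theorem \ref{thm:modulusAM} to get $|T| \in \mathcal{AM}^+(H_1)$. Then Theorem \ref{thm:characterization} yields a unique decomposition $|T| = \alpha I - K + F$ with the listed properties. Write the polar decomposition $T = V|T|$, where $V \in \mathcal{B}(H_1, H_2)$ is the partial isometry satisfying $N(V) = N(|T|) = N(\alpha I - K + F)$. Substituting gives $T = V(\alpha I - K + F)$, which is the desired form.

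For the reverse implication, suppose $T = V(\alpha I - K + F)$ with the hypotheses of (2). Set $Q := \alpha I - K + F$. Then $Q \geq 0$ and $Q \in \mathcal{AM}^+(H_1)$ by Theorem \ref{thm:characterization}. Since $V$ is a partial isometry with $N(V) = N(Q)$, the uniqueness clause in the polar decomposition theorem forces $Q = |T|$ (and that $V$ is the polar factor of $T$). Hence $|T| \in \mathcal{AM}^+(H_1)$, and Theorem \ref{thm:modulusAM} gives $T \in \mathcal{AM}(H_1, H_2)$.

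For uniqueness, assume $T = V_1(\alpha_1 I - K_1 + F_1) = V_2(\alpha_2 I - K_2 + F_2)$ are two decompositions meeting the conditions. For each $i$, set $Q_i = \alpha_i I - K_i + F_i \geq 0$; since $V_i$ is a partial isometry with $N(V_i) = N(Q_i)$, uniqueness of polar decomposition yields $Q_1 = |T| = Q_2$ and $V_1 = V_2$. Applying the uniqueness part of Theorem \ref{thm:characterization} to $Q_1 = Q_2$ then gives $\alpha_1 = \alpha_2$, $K_1 = K_2$, and $F_1 = F_2$. The only subtle point, and the one I would verify carefully, is that the decomposition $T = V Q$ with $Q = \alpha I - K + F$ and $N(V) = N(Q)$ genuinely satisfies the hypothesis of the polar decomposition theorem, so that its uniqueness clause can be invoked; since $Q$ is positive by construction and $N(V) = N(Q)$ is given, this is immediate, and there is no real obstacle beyond bookkeeping.
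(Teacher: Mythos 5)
Your proposal is correct and follows essentially the same route as the paper: both directions reduce to Theorem \ref{thm:modulusAM} and Theorem \ref{thm:characterization} via the existence and uniqueness clauses of the polar decomposition, exactly as in the paper's proof. The only cosmetic difference is that you invoke Theorem \ref{thm:characterization} to get $Q\in\mathcal{AM^+}(H_1)$ before identifying $Q=|T|$, whereas the paper first verifies $Q\geq 0$ and identifies $Q=|T|$ and then applies the two theorems; this ordering is immaterial.
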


\begin{proof}
	
	$(1)\Rightarrow(2)$:
	
	We have $T\in \mathcal{B}(H_1, H_2)$, then by the polar decomposition theorem there exists a unique partial isometry $V\in \mathcal{B}(H_1, H_2)$ such that $T=V|T|$ and $N(V)=N(|T|)$. From Theorem \ref{thm:modulusAM}, $T\in \mathcal{AM}(H_1, H_2)$ implies that $|T|\in \mathcal{AM^+}(H_1)$. Next, by Theorem \ref{thm:characterization}, there exists a decomposition for $|T|$ of the form $T:= \alpha I-K+F$ where  $K\in \mathcal{B}(H_1)$ is a positive compact operator with $\|K\|\leq \alpha$ and $F\in \mathcal{B}(H_1)$ is a positive finite rank operator satisfying $KF=FK=0$. Clearly, we have $N(V)=N(|T|)=N(\alpha I-K+F)$. Next, the uniqueness of the $V$ is clear and the uniqueness of $\alpha, K, F$ comes from the uniqueness of the decomposition for $|T|$.
	
	$(2)\Rightarrow(1)$:
	
	Let $T\in \mathcal{B}(H_1, H_2)$ and has the decomposition of the form given in (2). Let us, denote by $P:=\alpha I-K+F$. Since $\alpha I-K \geq 0$ and $F\geq 0$, we have $P\geq 0$. We also, have $V$ is a partial isometry with $N(V)=N(\alpha I-K+F)=N(P)$.
	Therefore by the uniqueness of the polar decomposition theorem, we must have $|T|=P$. That is, $|T|=\alpha I-K+F$. By applying Theorem \ref{thm:modulusAM} and Theorem \ref{thm:characterization}, it follows that $T\in \mathcal{AM}(H_1, H_2)$.
\end{proof}	

\bibliographystyle{amsplain}

\begin{thebibliography}{10}
	
\bibitem{ber}

S. J. Bernau, The spectral theorem for normal operators, J. London Math. Soc. {\bf 40} (1965), 478--486. MR0179617 (31 \#3864)


\bibitem{car1}
X. Carvajal and W. Neves, {\it Operators that achieve the norm}, Integral Equations and Operator Theory,{\bf72}(2012), 179-195.

\bibitem{car2}
X. Carvajal\ and\ W. Neves, Operators that attain their minima, Bull. Braz. Math. Soc. (N.S.) {\bf 45} (2014), no.~2, 293--312. MR3249529

\bibitem{con}
J. B. Conway, {\it A course in functional analysis}, second edition, Graduate Texts in Mathematics, 96, Springer, New York, 1990. MR1070713 (91e:46001)

\bibitem{gan}
J. Ganesh, G. Ramesh\ and\ D. Sukumar, On the structure of absolutely minimum attaining operators, J. Math. Anal. Appl. {\bf 428} (2015), no.~1, 457--470. MR3326997

\bibitem{goh}
I. Gohberg, S. Goldberg\ and\ M. A. Kaashoek, {\it Basic classes of linear operators}, Birkh\"auser, Basel, 2003. MR2015498 (2005g:47001)

\bibitem{hal}
P. R. Halmos, {\it A Hilbert space problem book}, second edition, Graduate Texts in Mathematics, 19, Springer, New York, 1982. MR0675952 (84e:47001)

\bibitem{hel}
G. Helmberg, {\it Introduction to spectral theory in Hilbert space}, North-Holland Series in Applied Mathematics and Mechanics, Vol. 6, North-Holland, Amsterdam, 1969. MR0243367 (39 \#4689)



\bibitem{kub}
C. S. Kubrusly, {\it The elements of operator theory}, second edition, Birkh\"auser/Springer, New York, 2011. MR2778685 (2012e:47001)

\bibitem{ram}

G. Ramesh, Structure theorem for $\mathcal{AN}$-operators, J. Aust. Math. Soc. {\bf 96} (2014), no.~3, 386--395. MR3217722

\bibitem{ree}

M. Reed\ and\ B. Simon, {\it Methods of modern mathematical physics. I. Functional analysis}, Academic Press, New York, 1972. MR0493419 (58 \#12429a)

\bibitem{sat}

Satish K. Pandey\ and\ Vern I. Paulsen, {\it A Spectral Characterization of $\mathcal{AN} Operators$},  To appear in  J. Aust. Math. Soc.  (http://arxiv.org/abs/1501.05869).



\end{thebibliography}

\end{document}